\newcommand{\bbC}{\mathbb{C}}
\newcommand{\bbD}{\mathbb{D}}
\newcommand{\bbN}{\mathbb{N}}
\newcommand{\bbR}{\mathbb{R}}
\newcommand{\bbZ}{\mathbb{Z}}
\newcommand{\calL}{\mathcal{L}}
\newcommand{\re}{\operatorname{Re}}
\newcommand{\per}{\operatorname{per}}
\newcommand{\pnt}{\operatorname{pnt}}
\newcommand{\phdot}{\mathord{\,\cdot\,}}
\theoremstyle{definition}
\newtheorem{definition}{Definition}[section]
\newtheorem{remark}[definition]{Remark}
\newtheorem{remarks}[definition]{Remarks}
\theoremstyle{plain}
\newtheorem{proposition}[definition]{Proposition}
\newtheorem{lemma}[definition]{Lemma}
\newtheorem{theorem}[definition]{Theorem}
\newtheorem{corollary}[definition]{Corollary}
\newtheorem{known_results}[definition]{Known Results}
\newtheorem*{open_problem}{Open Problem}
\begin{document}

\title[The peripheral spectrum of positive operators]{Growth rates and the peripheral spectrum of positive operators}
\author{Jochen Gl\"uck}
\email{jochen.glueck@uni-ulm.de}
\address{Jochen Gl\"uck, Institute of Applied Analysis, Ulm University, 89069 Ulm, Germany}
\date{\today}
\begin{abstract}
	Let $T$ be a positive operator on a complex Banach lattice. It is a long open problem whether the peripheral spectrum $\sigma_{\operatorname{per}}(T)$ of $T$ is always cyclic. We consider several growth conditions on $T$, involving its eigenvectors or its resolvent, and show that these conditions provide new sufficient criteria for the cyclicity of the peripheral spectrum of $T$. Moreover we give an alternative proof of the recent result that every (WS)-bounded positive operator has cyclic peripheral spectrum. We also consider irreducible operators $T$. If such an operator is Abel bounded, then it is known that every peripheral eigenvalue of $T$ is algebraically simple. We show that the same is true if $T$ only fulfils the weaker condition of being (WS)-bounded.
\end{abstract}
\keywords{Perron-Frobenius theory; Positive operator; Banach lattice; Cyclic peripheral spectrum; Growth condition}
\subjclass[2010]{47B65; 47A10; 46B42}

\maketitle

\section{Introduction and preliminaries}

The spectral theory of positive operators, often referred to as \emph{Perron-Frobenius theory}, is by now a classical topic in operator theory, and one of its outstanding unsolved problems is concerned with the \emph{peripheral spectrum} of an operator: Let $E$ be a complex Banach lattice and let $T: E \to E$ be a bounded linear operator with spectrum $\sigma(T)$ and spectral radius $r(T)$. We call $T$ \emph{positive} if $Tx \ge 0$ whenever $x \ge 0$. The \emph{peripheral spectrum} of $T$ is defined as
\begin{align*}
	\sigma_{\per}(T) := \{\lambda \in \sigma(T)| \, |\lambda| = r(T)\},
\end{align*}
i.e.~it consists of all spectral values with maximal modulus; the elements of $\sigma_{\per}(T)$ are called the \emph{peripheral spectral values} of $T$. Recall that a subset $M \subset \bbC$ is called \emph{cyclic} if $re^{i\theta} \in M$ ($r \ge 0$, $\theta \in \bbR$) implies $re^{ik\theta} \in M$ for all integers $k \in \bbZ$. The unsolved question mentioned above reads as follows:

\begin{open_problem}[Cyclicity Problem]
	Does every positive linear operator on a complex Banach lattice have cyclic peripheral spectrum?
\end{open_problem}

In finite dimensions this question dates back to Perron and Frobenius who proved at the beginning of the 20th century that every positive matrix has cyclic peripheral spectrum. In the 1960s and early 1970s a lot of research was done to understand the problem on infinite dimensional Banach lattices, culminating in some striking results due to Lotz, Krieger and Scheffold around 1970 (see Section~\ref{section:overview-cycl-problem} for detailed references). In particular it was shown by Lotz~\cite[Theorem~4.7]{Lotz1968} and, independently, by Krieger \cite[Folgerung~2.2.1(b)]{Krieger1969} that the peripheral spectrum of a positive operator $T$ is cyclic whenever $T$ is \emph{Abel bounded}, meaning that 
\begin{align*}
	\sup_{r > r(T)} \big(r-r(T)\big) \|R(r,T)\| < \infty;
\end{align*}
here, $R(r,T) := (r-T)^{-1}$ denotes the \emph{resolvent} of $T$ at $r$. Using a certain reduction technique, one can infer from this result that every compact positive operator has cyclic peripheral spectrum; see Section~\ref{section:overview-cycl-problem} for details. Several other sufficient conditions for the peripheral spectrum to be cyclic have been proved, too (some of them quite recently), and many of them impose some kind of growth rate on the operator or its resolvent (see again Section~\ref{section:overview-cycl-problem}). Nevertheless, the cyclicity problem is still not completely solved.

In this article we continue the research on this problem, again focussing on certain growth conditions. Our two main results are Theorems~\ref{thm:growth-rate-for-eigenvector} and~\ref{thm:quadratic-growth-and-linear-growth}. While the second theorem relies on resolvent growth estimates, the first theorem offers a somewhat different flavour. It reads as follows (for undefined notation we refer to the end of the introduction):

\begin{theorem} \label{thm:growth-rate-for-eigenvector}
	Let $E$ be a complex Banach lattice and let $T \in \calL(E)$ be positive, $r(T) = 1$. Suppose that $\lambda \in \sigma_{\per}(T)$ is an eigenvalue of $T$ and that $z \in E$ is a corresponding eigenvector with norm $\|z\| = 1$. If a sequence $1 < r_n \downarrow 1$ is given, then the following holds:
	\begin{itemize}
		\item[(a)] As $n \to \infty$, the norm of $R(r_n,T)|z|$ grows at least as fast as $\frac{1}{r_n-1}$ and at most as fast as $\|R(r_n,T)\|$; more precisely, we have $\frac{1}{r_n-1} \le \|R(r_n,T)|z|\| \le \|R(r_n,T)\|$ for each index $n$.
		\item[(b)] Suppose that the growth rate of $\|R(r_n,T)|z|\|$ is the smallest possible, i.e.~that we have $\|R(r_n,T)|z|\| \sim \frac{1}{r_n-1}$. Then $\lambda^k \in \sigma(T)$ for all $k \in \bbZ$.
		\item[(c)] Suppose that the growth rate of $\|R(r_n,T)|z|\|$ is the highest possible, i.e.~that we have $\|R(r_n,T)|z|\| \sim \|R(r_n,T)\|$. Then $\lambda^k \in \sigma(T)$ for all $k \in \bbZ$.
	\end{itemize}
\end{theorem}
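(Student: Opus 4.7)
\textbf{Part (a)} is direct. The eigenequation $Tz=\lambda z$ together with $|\lambda|=1$ and the modulus inequality for positive operators gives $|z|=|\lambda z|=|Tz|\le T|z|$; iterating yields $T^k|z|\ge |z|$ for every $k\ge 0$. Because $r_n>r(T)=1$, the Neumann series $R(r_n,T)=\sum_{k\ge 0}T^k/r_n^{k+1}$ converges with positive summands, so applied to $|z|$ it gives $R(r_n,T)|z|\ge|z|/(r_n-1)$; taking norms with $\||z|\|=\|z\|=1$ delivers the lower bound, while the upper bound is immediate from submultiplicativity.

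\textbf{Part (b).} Set $v_n:=(r_n-1)R(r_n,T)|z|$. By (a), $v_n\ge|z|$ and $\|v_n\|\ge 1$, and the hypothesis ensures $\sup_n\|v_n\|<\infty$. The resolvent identity rearranges to $(I-T)v_n=(r_n-1)(|z|-v_n)\to 0$ in norm, so $(v_n)$ is a bounded positive approximate fixed-point sequence. The plan is then to pass to a weak-$*$ cluster point $\tilde v\in E^{**}$ (Banach--Alaoglu): positivity and the estimate $v_n\ge|z|$ survive the limit, giving $T^{**}\tilde v=\tilde v$ and $\tilde v\ge\iota(|z|)$. The principal ideal $\mathcal{I}$ of $\tilde v$ in $E^{**}$ is $T^{**}$-invariant, since $|x^{**}|\le C\tilde v$ forces $|T^{**}x^{**}|\le CT^{**}\tilde v=C\tilde v$; equipped with its Minkowski-unit norm, $\mathcal{I}$ is Kakutani--Bohnenblust isomorphic to some $C(K)$ with $\tilde v\leftrightarrow\mathbf{1}$, and the restriction $S:=T^{**}|_{\mathcal{I}}$ becomes a Markov operator. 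Because $|\iota(z)|=\iota(|z|)\le\tilde v$, the vector $\iota(z)$ lies in $\mathcal{I}$ and is a $\lambda$-eigenvector of $S$, so the classical cyclicity of the peripheral point spectrum of Markov operators on $C(K)$ yields $\lambda^k\in\sigma_p(S)\subseteq\sigma_p(T^{**})\subseteq\sigma(T^{**})=\sigma(T)$ for every $k\in\bbZ$.

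\textbf{Part (c).} Here $(v_n)$ need no longer be bounded, so I would dualize. Using $\|x\|=\sup\{\phi(x):0\le\phi,\|\phi\|\le 1\}$ for $x\ge 0$ in a Banach lattice, pick positive unit-norm $\phi_n\in E^*$ with $\phi_n(R(r_n,T)|z|)=\|R(r_n,T)|z|\|$, and set $\psi_n:=R(r_n,T^*)\phi_n/\|R(r_n,T^*)\phi_n\|$. The inequality $\|R(r_n,T^*)\phi_n\|\ge\phi_n(R(r_n,T)|z|)=\|R(r_n,T)|z|\|$ together with the hypothesis $\|R(r_n,T)|z|\|\sim\|R(r_n,T)\|$ forces $\psi_n\ge 0$, $\|\psi_n\|=1$, $\psi_n(|z|)\ge c>0$, and $(I-T^*)\psi_n\to 0$ (the last because $\|R(r_n,T^*)\phi_n\|\to\infty$ by (a)). A weak-$*$ cluster point $\tilde\psi\in E^*$ is then a positive $T^*$-fixed functional with $\tilde\psi(|z|)>0$. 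Equipping $E$ with the seminorm $p(x):=\tilde\psi(|x|)$, quotienting by its null ideal, and completing yields an AL-space on which $T$ descends to a positive contraction preserving a strictly positive invariant functional and still carrying $\lambda$ as an eigenvalue; its dual is a Markov operator on a $C(K)$-space, and classical cyclicity transfers $\lambda^k$ back into $\sigma(T)$.

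\textbf{Main obstacle.} The delicate step in both (b) and (c) is ensuring that the reduction to a Markov operator preserves the eigenvalue $\lambda$: in (b) one needs $\iota(z)$ to be a non-zero element of the $C(K)$-ideal, which is exactly the content of $|\iota(z)|=\iota(|z|)\le\tilde v$, and in (c) one needs the image of $z$ in the AL-quotient to remain non-zero, which is exactly $\tilde\psi(|z|)>0$. Both non-triviality statements trace back to the lower bound $R(r_n,T)|z|\ge|z|/(r_n-1)$ of part (a), so part (a) is not merely a companion estimate but is genuinely used in the proofs of (b) and (c).
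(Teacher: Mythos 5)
Your part (a) is correct and matches the paper's argument. Your part (b) begins exactly as the paper does (a bounded increasing orbit $(T^k|z|)$, a weak-$*$ limit in $E''$, a $T''$-fixed vector $\tilde v$ dominating $|z|$), but the last sentence contains a genuine gap. The ``classical cyclicity of the peripheral point spectrum of Markov operators on $C(K)$'' is the statement in the paper's Lemma~\ref{lem:similarity-lemma} (or \cite[Theorem~B-III-2.4]{Arendt1986}), and it requires the eigenfunction $f$ to satisfy $|f|=\mathbbm{1}_K$, since the similarity is implemented by the (necessarily invertible) multiplication operator with the unimodular symbol $f$. In your construction you only have $|\iota(z)| \le \tilde v$ (which plays the role of $\mathbbm{1}_K$), and equality fails in general --- indeed $\tilde v = \lim_k T^k|z|$ typically strictly dominates $|z|$. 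Passing from a dominated eigenvector ($|z| \le x$, $Tx=x$) to the conclusion $\lambda^k\in\sigma(T)$ is precisely the non-trivial content of \cite[Theorem~3.2]{Gluck2015} (or Krieger, Satz~2.2.2), which is what the paper's Proposition~\ref{prop:dominated-eigenvector} cites at exactly this point. Without that result (or an independent proof of it), your argument for (b) is incomplete.

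Your part (c) takes a genuinely different route from the paper (which instead reduces modulo the closed ideal $J=\{x: R(r_n,T)|x|/\|R(r_n,T)\|\to 0\}$ and again applies the dominated-eigenvector proposition). Producing a strictly positive $T'$-fixed functional $\tilde\psi$ with $\tilde\psi(|z|)>0$ from the growth hypothesis is correct and a nice idea. However, the finish as written has two problems. First, the AL-space $L$ you construct is the completion of $E/N$ in a strictly weaker norm, so $L$ is \emph{not} a Banach-space quotient of $E$; Proposition~\ref{prop:spectrum-of-induced-operators}(c) does not apply, and there is no a priori inclusion of $\sigma(T_L)\cap\{|\mu|=1\}$ into $\sigma(T)$. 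Second, the adjoint of $T_L$ on the $C(K)$-dual is only sub-Markov (since $T_L$ is merely a contraction, not a positive isometry on $L$), and in any case eigenvalues of the adjoint do not yield eigenvalues of $T_L$. The repair is simpler than what you wrote: stay on the Banach quotient $E/N$ with its quotient norm. There $\tilde\psi$ descends to a strictly positive functional; from $T_{E/N}(|z|+N)\ge |z|+N$ and $\tilde\psi\bigl(T_{E/N}(|z|+N)-(|z|+N)\bigr)=\bigl(T'\tilde\psi-\tilde\psi\bigr)(|z|)=0$ you get $T_{E/N}(|z|+N)=|z|+N$, so now the modulus of the $\lambda$-eigenvector \emph{is} a fixed vector and the genuinely classical $C(K)$-Markov result applies on the principal ideal generated by $|z|+N$. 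Then $\lambda^k\in\sigma_{\pnt}(T_{E/N})\subset\sigma(T_{E/N})$, and Proposition~\ref{prop:spectrum-of-induced-operators}(c) transfers this to $\sigma(T)$. With that repair, your part (c) becomes an alternative proof not in the paper.
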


At first glance, the theorem only makes an assertion about eigenvalues in the peripheral spectrum of $T$; however, this is not really a restriction since by lifting $T$ to an ultra power of $E$ one can always achieve that the peripheral spectrum entirely consists of eigenvalues. What makes Theorem~\ref{thm:growth-rate-for-eigenvector} rather interesting are the two extreme cases in assertions (b) and (c): no matter whether the norm of $R(r_n,T)|z|$ grows as slowly or as fast as possible, we can always conclude that $\lambda^k \in \sigma(T)$ for all $k \in \bbZ$. It is thus very tempting to suspect that one can generalize the result to any growth rate of $\|R(r_n,T)|z|\|$, thereby proving that the cyclicity problem has a positive answer; this conjecture is also supported by the similarity of the proofs of assertions (b) and (c) which will become apparent in Section~\ref{section:growth-rates-for-eigenvectors}.

Besides proving our main results, we want to add some new tools to the literature and explain different methods which could be useful when engaging the cyclicity problem. Therefore we give two alternative proofs for several of our results and we also provide new proofs and/or slight generalizations for some results from the literature. 

Let us briefly outline how the paper is organised: Section~\ref{section:overview-cycl-problem} gives an overview of what is already known about the cyclicity problem. Section~\ref{section:growth-rates-for-eigenvectors} contains the proof of Theorem~\ref{thm:growth-rate-for-eigenvector} and some additional information on the peripheral spectrum of positive operators which is derived from recent results in \cite{Gluck2015}. Section~\ref{section:quadratic-resolvent-growth-the-peripheral-spectrum} contains our second main result Theorem~\ref{thm:quadratic-growth-and-linear-growth}. In Section~\ref{section:invariant-ideals-for-ws-bounded-operators} we proof a proposition on invariant ideals of so-called \emph{WS-bounded} operators (see Definition~\ref{def:ws-bounded}) which is then used to derive the following two results: first we give an alternative proof for the main result of \cite{Gluck2015}, stating the every \emph{(WS)-bounded} positive operator has cyclic peripheral spectrum; second we consider (WS)-bounded irreducible operators and show several results for their peripheral (point) spectrum which were previously only known for Abel bounded irreducible operators. The paper concludes with a very short appendix summarizing some known facts about the spectrum of operators which admit an invariant subspace; those results are needed throughout the article.

It is important to note that the cyclicity problem is not an isolated topic in operator theory. For an overview of several further aspects of Perron-Frobenius theory we refer the reader to the interesting survey article by Grobler \cite{Grobler1995}. Moreover, we find it worthwhile pointing out the following related developments: After a certain climax in the development of infinite-dimensional Perron-Frobenius theory around 1970, many results for positive operators were transferred to the theory of positive $C_0$-semigroups in the 1980s (see \cite{Arendt1986} for an overview). Quite recently a spectral theory for \emph{eventually positive} operators and $C_0$-semigroups started to emerge (see e.g.~\cite{Zaslavsky2003, Johnson2004, Noutsos2006} and the references therein for the matrix case and \cite{Daners2016, DanersInPrep} and the references therein for the $C_0$-semigroup case). Research on the (peripheral) spectrum of positive operators can also be pursued in some other directions, for example concerning essential spectra (see e.g.~\cite{Caselles1987}; see also \cite{Alekhno2010} and the references therein), concerning domination (see e.g.~\cite{Raebiger1997, Raebiger2000}), or within the framework of ordered Banach algebras (see e.g.~\cite[Section~4]{Scheffold1971} and \cite[Part~D]{Arendt1986}; see also \cite{Mouton2013, Muzundu2012} and the references therein for a more abstract approach). Moreover, many results from Perron-Frobenius theory remain true if the spectrum of a positive operator is replaced with its numerical range (see e.g.~\cite{Li2002, Maroulas2002, Aretaki2013} and the references therein for the matrix case and \cite{Radl2013, Radl2015} for the infinite dimensional case).

We conclude the introduction by briefly introducing some notation. Throughout the article we assume the reader to be familiar with the theory of (real and complex) Banach lattices; standard references for this topic are for example \cite{Schaefer1974} and \cite{Meyer-Nieberg1991}. Besides the spectral theoretic notations which were already introduced above, we use the following conventions: We set $\bbN := \{1,2,3,...\}$ and $\bbN_0 := \{0\} \cup \bbN$. If $E$ is a real or complex Banach space, then $\calL(E)$ denotes the space of all bounded linear operators on $E$. If $E$ is a complex Banach space, then for every $T \in \calL(E)$ the set $\sigma_{\pnt}(T)$ denotes the \emph{point spectrum} of $T$. The intersection $\sigma_{\per}(T) \cap \sigma_{\pnt}(T)$ is called the \emph{peripheral point spectrum} of $T$ and its elements are called \emph{peripheral eigenvalues} of $T$. The dual space of a real or complex Banach space $E$ is denoted by $E'$, and the adjoint of an operator $T \in \calL(E)$ is denoted by $T' \in \calL(E')$. If $E$ is a real Banach lattice and $x,y \in E$ then we write $x < y$ to say that $x \le y$, but $x \not= y$. If $E$ is a complex Banach lattice and $x,y \in E$ then we write $x \le y$ (or $x < y$) as a shorthand to say that $x,y$ are contained in the real part of $E$ and that $x \le y$ (or $x < y$). An element $x$ of a real or complex Banach lattice $E$ is called \emph{positive} if $x \ge 0$ and the set $E_+ := \{x \in E| \, x \ge 0\}$ is called the \emph{positive cone} of $E$. An operator $T \in \calL(E)$ on a real or complex Banach lattice $E$ is called \emph{positive}, which we denote by $T \ge 0$, if $TE_+ \subset E_+$. It is well-known that the dual space $E'$ of a real or complex Banach lattice is again a real or complex Banach lattice, where $E'_+ = \{x' \in E'| \, \langle x',x \rangle \ge 0 \; \forall x \in E_+\}$; the elements of $E'_+$ are called \emph{positive functionals} on $E$. For every positive element $x$ of a real or complex Banach lattice $E$ the set $E_x := \{y \in E| \, \exists c > 0 : |y| \le cx\}$ is called the \emph{principal ideal} generated by $x$; the element $x \in E_+$ is called a \emph{quasi-interior point} of $E_+$ if $E_x$ is dense in $E$. 

A closed vector subspace $F$ of a real Banach lattice $E$ is called a \emph{sublattice} of $E$ if $|f| \in F$ for all $f \in F$. A closed vector subspace $F$ of a complex Banach lattice $E$ is called a \emph{sublattice} of $E$ if $F$ is conjugation invariant and if we have $|f| \in F$ for all $f \in F$; one can check that this is equivalent to $F$ being conjugation invariant and the real part of $F$ being a sublattice of the real part of $E$.

If $K \not= \emptyset$ is a compact Hausdorff space, then we denote by $C(K;\bbR)$ and $C(K;\bbC)$ the spaces of all real or complex-valued continuous functions on $K$, respectively; those spaces are always endowed with the supremum norm and $C(K;\bbR)$ is always equipped with the canonical order. By $\mathbbm{1}_K$ we denote the constant function on $K$ with value $1$. A linear operator $T: C(K;\bbR) \to C(K;\bbR)$ is called a \emph{Markov operator} if $T$ is positive and if $T\mathbbm{1}_K = \mathbbm{1}_K$; the same notion is also used for operators $C(K;\bbC) \to C(K;\bbC)$. 

Throughout the paper we make use of the $O$- and $o$-notation to compare the asymptotic behaviour of sequences of non-negative real numbers. For two sequences $(a_n)$ and $(b_n)$ of non-negative real numbers we write $a_n \sim b_n$ if $a_n = O(b_n)$ and $b_n = O(a_n)$.

\section{What is known about the cyclicity problem?} \label{section:overview-cycl-problem}

Many results have already been proved on the peripheral spectrum of positive operators. In this section we give a brief overview of them. For the sake of simplicity we only consider operators with spectral radius $1$; this is, of course, no loss of generality since we can always reduce the general case to this situation by a rescaling argument. 

Many known results on the topic are related to the growth behaviour of the resolvent close to the peripheral spectrum. For an intelligent reading of them the following observation is important: Let $E$ be a complex Banach lattice, let $T \in \calL(E)$ be positive with $r(T) = 1$ and let $\lambda$ be a peripheral spectral value of $T$. Then the estimate
\begin{align}
	\frac{1}{r-1} \le \|R(r\lambda,T)\| \le \|R(r,T)\| \label{form_canonical_resolvent_estimate}
\end{align}
holds for every $r > 1$. This shows that for $r \downarrow 1$ the operator $R(r,T)$ grows at least as fast in norm as the operator $R(r\lambda,T)$, and this latter operator grows at least as fast as $\frac{1}{r-1}$. One should keep this observation in mind when reading the following results which we quote from the literature:

\begin{known_results} \label{ov:cyclcity-problem-growth-conditions-on-resolvent}
	Let $E$ be a complex Banach lattice, let $T \in \calL(E)$ be positive and assume that $r(T) = 1$. Moreover, choose a sequence $1 < r_n \downarrow 1$.
	\begin{itemize}
		\item[(a)] If $\|R(r_n,T)\|$ grows as slowly as possible, i.e.~if we have $\|R(r_n,T)\| \sim \frac{1}{r_n-1}$, then the peripheral spectrum of $T$ is cyclic.
		\item[(b)] Let $\lambda \in \sigma_{\per}(T)$. If $R(r_n\lambda,T)$ grows as fast as possible, i.e.\ if we have $\|R(r_n\lambda,T)\| \sim \|R(r_n,T)\|$, then $\lambda^k \in \sigma_{\per}(T)$ for every $k \in \bbZ$.
		\item[(c)] Assume that $R(r_n,T)$ grows at most quadratically, i.e.~assume that we have $\|R(r_n,T)\| = O(\frac{1}{(r_n-1)^2})$. Let $\lambda \in \sigma_{\per}(T)$ be an eigenvalue of $T$ with eigenvector $z$ and assume that there exists a functional $x' \in E'$ which fulfils $0 \le x' \le T'x'$ and $\langle x', |z| \rangle \not= 0$. Then $\lambda^k \in \sigma_{\per}(T)$ for each $k \in \bbZ$.
	\end{itemize}
\end{known_results}

Assertion (a) was proved by Lotz \cite[Theorem~4.7]{Lotz1968} and, independently, by Krieger \cite[Folgerung~2.2.1(b)]{Krieger1969} (actually, they both assumed that $\|R(r,T)\| \sim \frac{1}{r-1}$ as $r \downarrow 1$, but a short glance at the proofs shows that one only needs to consider a sequence); the proofs of Lotz and Krieger are similar in some, but not in all aspects. In \cite[first part of the proof of Theorem~V.4.9]{Schaefer1974} a version of the proof can be read in English. We point out that assertion (a) can also be seen as a special case of assertion~\ref{ov:cyclcity-problem-growth-conditions-on-resolvent}(b) and as a special case of assertion~\ref{ov:cyclicity-problem-other-conditions}(f) below. Assertion (b) is due to Krieger \cite[Folgerung~2.2.3]{Krieger1969}; see also \cite[Theorem~7.8]{Gluck2015} for an English presentation of the result and a partly different proof. Assertion (c) is also due to Krieger who proved it in \cite[Folgerung~2.2.4]{Krieger1969} (actually, Krieger proved the result under slightly different assumptions, but it is not difficult to see that his proof also works under the assumptions made in assertion (c)). Note that the assumed inequality $x' \le T'x'$ in assertion (c) is converse to the inequality $x' \ge T'x'$ which sometimes appears in auxiliary results in Perron-Frobenius theory (see e.g.~\cite[Proposition~V.5.1]{Schaefer1974}).

Other known results on the cyclicity problem replace the growth conditions on the resolvent by different assumptions. We also give an overview of those results:

\begin{known_results} \label{ov:cyclicity-problem-other-conditions}
	Let $E$ be a complex Banach lattice, let $T \in \calL(E)$ be positive and assume that $r(T) = 1$.
	\begin{itemize}
		\item[(a)] If the operator $T$ is \emph{Abel solvable} (see \cite[p.\ 152]{Grobler1995} for a definition), then the peripheral spectrum of $T$ is cyclic.
		\item[(b)] If the spectral radius $r(T) = 1$ is a pole of the resolvent $R(\cdot,T)$ then the peripheral spectrum of $T$ is cyclic (note that $r(T)$ is automatically a pole of $R(\cdot,T)$ if $T$, or more generally some power of $T$, is compact).
		\item[(c)] If the spectral radius $r(T) = 1$ is an isolated point in the spectrum $\sigma(T)$ and if $[0,1)$ is contained in the resolvent set of $T$, then the peripheral spectrum of $T$ is cyclic.
		\item[(d)] If $\sigma(T)$ is contained in the complex unit circle, then $\sigma(T) = \sigma_{\per}(T)$ is cyclic.
		\item[(e)] If there exists a sequence $(a_n)_{n \in \bbN}$ of non-negative real numbers such that we have $T^n \ge a_n I$ for each $n \in \bbN$ (where $I$ denotes the identity map on $E$) and such that $\limsup a_n^{1/n} = 1$, then the peripheral spectrum of $T$ is cyclic.
		\item[(f)] If $T$ is (WS)-bounded (see Definition~\ref{def:ws-bounded} below or \cite[Definition~4.7 and Example~4.8]{Gluck2015}) then the peripheral spectrum of $T$ is cyclic.
	\end{itemize}
\end{known_results}

Assertion (b) is a consequence of (a) and both assertions are due to Lotz \cite[Theorems~4.9 and~4.10]{Lotz1968}; see also \cite[the Theorem~V.4.9 and its Corollary]{Schaefer1974} for an English version. Note Lotz \cite[p.\,26]{Lotz1968} uses the term \emph{aufl\"osbar} (which is German for \emph{solvable}) and Schaefer \cite[Definition~V.4.7]{Schaefer1974} uses the the term \emph{(G)-solvable} instead of ``Abel solvable'', but they all mean the same condition. Assertion (c) was proved by Krieger in \cite[Satz~2.2.3]{Krieger1969} (see also \cite[p.\,352]{Schaefer1974} where this result is stated in English). Assertions (d) and (e) are due to Zhang (see \cite[p.\,118]{Zhang1993}; there it is also noted that assertions (c) and (d) can be seen as special cases of (e)) and assertion (f) was proved by the author \cite[Theorem~7.1]{Gluck2015}. In Section~\ref{section:invariant-ideals-for-ws-bounded-operators} of the current article we will present an alternative proof for assertion (f) (see Corollary~\ref{cor:ws-bounded-positive-operators-has-cyclic-peripheral-spectrum} for details). Assertion (f) also generalizes a result of Scheffold~\cite[Satz~3.6]{Scheffold1971} asserting that every partially power-bounded positive operator with spectral radius $1$ has cyclic peripheral spectrum.

A few further conditions for the cyclicity of the peripheral spectrum were recently given by the author in~\cite[Theorem~7.4 and Corollaries~7.5 and~7.6]{Gluck2015}. We did not mention those results explicitly above since we are going to prove a slightly more general theorem which contains these three results as special cases in Theorem~\ref{thm:dae} below.

\section{Growth rates for eigenvectors and the peripheral spectrum} \label{section:growth-rates-for-eigenvectors}

We start this section with the following observation on eigenvalues of positive operators.

\begin{proposition} \label{prop:dominated-eigenvector}
	Let $E$ be a complex Banach lattice, let $T \in \calL(E)$ be a positive operator and let $\lambda \in \bbC$ be an eigenvalue of $T$ with $|\lambda| = 1$ and with corresponding eigenvector $z$. Assume that at least one of the following two assumptions is fulfilled:
	\begin{itemize}
		\item[(a)] $\sup_{n \in \bbN_0} \|T^n|z|\| < \infty$. 
		\item[(b)] $1$ is an eigenvalue of $T$ and possesses an eigenvector $x$ which fulfils $x \ge |z|$.
	\end{itemize}
	Then $\lambda^k \in \sigma_{\pnt}(T'') \subset \sigma(T)$ for each $k \in \bbZ$.
	\begin{proof}
		(a) By means of evaluation we consider $E$ as a sublattice of $E''$. Note that $T|z| \ge |Tz| = |z|$ and hence the sequence $(T^n|z|)_{n \in \bbN_0} = ((T'')^n|z|)_{n \in \bbN_0}$ is increasing. By assumption (a) the sequence thus weak${}^*$-converges to an element $x \ge |z|$ in $E''$. Since $T''$ is weak${}^*$-weak${}^*$-continuous, $x$ is a fixed point of $T''$ and 
		hence it follows from \cite[Theorem~3.2]{Gluck2015} that $\lambda^k$ is an eigenvalue of $T''$ for each $k \in \bbZ$. This proves the assertion. 
		
		(b) If condition (b) is fulfilled, then obviously condition (a) is fulfilled as well. Hence, the assertion follows.
	\end{proof}
\end{proposition}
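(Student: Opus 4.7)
My plan is to reduce (b) to (a) and then to lift the problem to the bidual $E''$. For (b), if $x$ is a positive fixed point of $T$ with $x \ge |z|$, then $T^n |z| \le T^n x = x$ for every $n \in \bbN_0$, so the orbit $(T^n|z|)_n$ is norm bounded and (a) applies.

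For (a), the starting observation is the fundamental positivity inequality $T|z| \ge |Tz| = |\lambda|\,|z| = |z|$, which by iteration shows that $(T^n|z|)_{n \in \bbN_0}$ is an increasing sequence in $E_+$. Embedding $E$ isometrically as a sublattice of the order-complete Banach lattice $E''$, a bounded increasing sequence admits a supremum, and a routine monotone-convergence argument at the level of the duality $(E',E'')$ yields weak-${}^*$ convergence $T^n|z| \to x$ in $E''$ with $x \ge |z|$. Since $T''$ is weak-${}^*$-weak-${}^*$ continuous, $T''x$ is the weak-${}^*$ limit of $T^{n+1}|z|$, which is again $x$; hence $x$ is a positive fixed point of $T''$ dominating the eigenvector $z$ (viewed in $E''$) for the peripheral eigenvalue $\lambda$.

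At this point I would invoke \cite[Theorem~3.2]{Gluck2015}, which in precisely this configuration---a positive operator admitting a positive fixed point that dominates in modulus an eigenvector for a unimodular eigenvalue---produces the whole cyclic orbit $\{\lambda^k : k \in \bbZ\}$ inside $\sigma_{\pnt}(T'')$. Combined with the standard inclusion $\sigma_{\pnt}(T'') \subset \sigma(T)$ recorded in the appendix, this yields the proposition. The only technically delicate step is the weak-${}^*$ convergence of the monotone orbit in $E''$; since this is a standard consequence of order completeness of the bidual and the canonical identification of $E''_+$ with the positive $\sigma(E'',E')$-continuous functionals on $E'_+$, I do not expect any real obstacle beyond carefully matching the hypotheses of \cite[Theorem~3.2]{Gluck2015}.
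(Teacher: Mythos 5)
Your argument coincides with the paper's: you reduce (b) to (a) by observing that $T^n|z| \le T^n x = x$ gives boundedness, and for (a) you use $T|z|\ge|Tz|=|z|$, take the weak-${}^*$ limit of the increasing bounded orbit in $E''$ to produce a fixed point of $T''$ dominating $|z|$, and then invoke \cite[Theorem~3.2]{Gluck2015}. One small slip: the inclusion $\sigma_{\pnt}(T'')\subset\sigma(T)$ is the standard fact $\sigma(T'')=\sigma(T)$ for bi-adjoints, not what the appendix records (that proposition is about restrictions to and quotients by closed invariant subspaces), but this does not affect the substance of the proof.
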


Note that in the above proposition we did \emph{not} assume that $r(T) = 1$. Yet, throughout the paper we only need the special case of Proposition~\ref{prop:dominated-eigenvector} where $r(T) = 1$; moreover, we won't need that $\lambda^k$ is an eigenvalue of the bi-adjoint of $T$, but only that $\lambda^k \in \sigma(T)$. Instead of referring to \cite[Theorem~3.2]{Gluck2015} in the above proof of Proposition~\ref{prop:dominated-eigenvector} we could also have inferred this slightly weaker conclusion from a result of Krieger \cite[Satz~2.2.2]{Krieger1969}. We should also point out that the conclusion of Proposition~\ref{prop:dominated-eigenvector} can be considerably strengthened if we even have the equality $x = |z|$ in (b): in this case we even obtain $\lambda^k \in \sigma_{\pnt}(T)$ for each $k \in \bbZ$. This is a classical result in Perron-Frobenius theory which can be proved similarly as \cite[Theorem~C-III-2.2]{Arendt1986}. 

We now want to give two proofs of our first main result, Theorem~\ref{thm:growth-rate-for-eigenvector}. The first one is rather short, but it does not give much insight into the relation between assertions (b) and (c) in the theorem. The second proof is a bit lengthier, but has the advantage that the arguments for assertions (b) and (c) look rather similar. Hence, we find the second proof more likely to allow for a possible generalization to all growth rates of $R(r_n,T)|z|$ (although the author has not been able to prove such a generalization, yet).

\begin{proof}[First proof of Theorem~\ref{thm:growth-rate-for-eigenvector}]
	(a) For each $r > 1$ we have
	\begin{align*}
		\frac{1}{r-1} = \|R(r\lambda,T)z\| \le \|R(r,T)|z|\| \le \|R(r,T)\|.
	\end{align*}
	(b) Since $0 \le T|z| \le T^2|z| \le ...$ and since $M := \sup_{n}(r_n-1)\|R(r_n,T)|z| \| < \infty$, it follows that the sequence $(T^k|z|)_{k \in \bbN_0}$ is bounded in norm. Indeed, for each $k \in \bbN_0$ and each index $n$ we have
	\begin{align*}
		(r_n-1)R(r_n,T)|z| \ge (r_n-1) \sum_{j=k}^\infty \frac{T^j|z|}{r_n^{j+1}} \ge (r_n-1) \sum_{j=k}^\infty \frac{T^k|z|}{r_n^{j+1}} = \frac{T^k|z|}{r_n^k}.
	\end{align*}
	Hence, $\|T^k|z|\| \le M r_n^k$ for each $k \in \bbN_0$ and each index $n$. Letting $n \to \infty$ we obtain $\|T^k|z|\| \le M$ for each $k \in \bbN_0$ as claimed. This argument is taken from \cite[the proof of Theorem~3.5]{Scheffold1971}; in a more general setting, such a result can also be found in \cite[Lemma~4.13]{Gluck2015}. The assertion now follows from Proposition~\ref{prop:dominated-eigenvector}(a).
	
	(c) Consider the subset $J := \{x \in E| \, \frac{R(r_n,T)}{\|R(r_n,T)\|}|x| \to 0\}$ of $E$. This is a closed ideal in $E$ and according to the assumption, the vector $z + J$ is a non-zero element of the quotient space $E/J$. The operator $T$ leaves $J$ invariant and if $T_/$ denotes the operator induced by $T$ on $E/J$, then $z+J$ is an eigenvector of $T_/$ for the eigenvalue $\lambda$. Moreover, one easily verifies that $|z| + J$ is an eigenvector of $T_/$ for the eigenvalue $1$ (simply use the facts $T|z| \ge |z|$ and $\|R(r_n,T)\| \to \infty$ to check that $T|z| - |z| \in J$). Proposition~\ref{prop:dominated-eigenvector}(b) thus shows that $\lambda^k \in \sigma(T_/)$ for each $k \in \bbZ$, and Proposition~\ref{prop:spectrum-of-induced-operators}(c) finally implies that $\lambda^k \in \sigma(T)$ for each $k \in \bbZ$.
\end{proof}

For the second proof of Theorem~\ref{thm:growth-rate-for-eigenvector} we need to recall a few notions: Let $E$ be a complex Banach space and $T \in \calL(E)$. A complex number $\lambda$ is called an \emph{approximate eigenvalue} of $T$ if there is a sequence $(x_n) \subset E$ such that $0 < \liminf_n \|x_n\| \le \limsup_n \|x_n\| < \infty$ and such that $(\lambda - T)x_n \to 0$. In this case the sequence $x_n$ is called an \emph{approximate eigenvector} of $T$ for the approximate eigenvalue $\lambda$. By $l^\infty(\bbN;E)$ we denote the space of all norm bounded sequences $x = (x_n)_{n \in \bbN}$ in $E$, endowed with the supremum norm $\|x\|_\infty := \sup_{n\in \bbN} \|x_n\|$, and by $c_0(\bbN;E)$ we denote the closed subspace of $l^\infty(\bbN;E)$ which consists of all sequences in $E$ that converge to $0$. If $E$ is a Banach lattice, then so is $l^\infty(\bbN;E)$, and $c_0(\bbN;E)$ is then a closed ideal in $l^\infty(\bbN;E)$.

\begin{proof}[Second proof of Theorem~\ref{thm:growth-rate-for-eigenvector}~(b) and~(c)]
	(b) Let $\tilde T$ be the operator induced by $T$ on the complex Banach lattice $l^\infty(\bbN;E)$; it has the same spectrum as $T$. The subset $I := c_0(\bbN;E) \subset l^\infty(\bbN;E)$ is a closed ideal in $l^\infty(\bbN;E)$ and invariant under $\tilde T$; by $\hat T$ we denote the operator induced by $\tilde T$ on the quotient space $\hat E:= l^\infty(\bbN;E)/I$. 
	
	By assumption the sequence $((r_n-1)R(r_n,T)|z|)_{n \in \bbN} \subset E$ is norm bounded and thus contained in $l^\infty(\bbN;E)$; we denote its equivalence class in $\hat E$ by $\hat x$. Then $\hat x$ is non-zero and it is an eigenvector of $\hat T$ for the eigenvalue $1$ since a short computation shows that that sequence $((r_n-1)R(r_n,T)|z|)_{n \in \bbN}$ is an approximate eigenvector of $T$ for the spectral value $1$. Finally, observe that 
	\begin{align*}
		|z| = |(r_n-1)R(r_n\lambda,T)z| \le (r_n-1) R(r_n,T)|z|.
	\end{align*}
	Hence, we have $|\hat z| \le \hat x$, where $\hat z \in \hat E$ is the equivalence class of the constant sequence $(z)_{n \in \bbN}$ modulo $I$. Since $\hat z$ is an eigenvector of $\hat T$ for the eigenvalue $\lambda$ we obtain $\lambda^k \in \sigma(\hat T)$ for each $k \in \bbZ$ due to Proposition~\ref{prop:dominated-eigenvector}(b) and thus $\lambda^k \in \sigma(\tilde T) = \sigma(T)$ for every $k \in \bbZ$ by Proposition~\ref{prop:spectrum-of-induced-operators}(c).
	
	(c) Again, let $\tilde T$ be the operator induced by $T$ on $l^\infty(\bbN;E)$, but this time consider the subset $I \subset l^\infty(\bbN;E)$ which is given by
	\begin{align*}
		I := \{(x_n)_{n \in \bbN} \in l^\infty(\bbN;E)| \, \frac{R(r_n,T)}{\|R(r_n,T)\|}|x_n| \to 0\}.
	\end{align*}
	This is easily seen to be a closed, $\tilde T$-invariant ideal in the complex Banach lattice $l^\infty(\bbN;E)$. The operator induced by $\tilde T$ on the quotient space $\hat E := l^\infty(\bbN;E)/I$ is again denoted by $\hat T$. 
	
	As above, let $\hat z \in \hat E$ be the equivalence class of the constant sequence $(z)_{n \in \bbN}$ modulo $I$. It follows from the assumption of (c) that $\hat z$ is non-zero and thus it is an eigenvector of $\hat T$ for the eigenvalue $\lambda$. On the other hand, one can readily check that $\frac{R(r_n,T)}{\|R(r_n,T)\|}|T|z| - |z|\,| \to 0$; this implies that $\hat T |\hat z| = |\hat z|$, i.e.~$|\hat z|$ is an eigenvector of $\hat T$ for the eigenvalue $1$. Thus it follows from Proposition~\ref{prop:dominated-eigenvector}(b) that $\lambda^k \in \sigma(\hat T)$ for all $k \in \bbZ$. Proposition~\ref{prop:spectrum-of-induced-operators}(c) now shows that $\lambda^k \in \sigma(\tilde T) = \sigma(T)$ for each $k \in \bbZ$.
\end{proof}

The second proof of Theorem~\ref{thm:growth-rate-for-eigenvector}(b) was based on comparing an eigenvector of $T$ for the eigenvalue $\lambda$ with an approximate eigenvector for the spectral value $1$. More generally, we can also compare two approximate eigenvectors; this is the basic idea of the following definition, which we quote from \cite[Definition~7.3]{Gluck2015}:

\begin{definition} \label{def:dae}
	Let $E$ be a complex Banach lattice, let $T \in \calL(E)$ be positive and let $\lambda \not= 0$ be an approximate eigenvalue of $T$. Then $\lambda$ is said to fulfil the \emph{dominated approximate eigenvector condition} for $T$ if $|\lambda|$ is also an approximate eigenvalue of $T$ and if there exist approximate eigenvectors $(z_n)_{n \in \bbN}$ and $(x_n)_{n \in \bbN}$ for $\lambda$ and $|\lambda|$, respectively, which satisfy the estimate $|z_n| \le x_n$ for all $n \in \bbN$.
\end{definition}

In \cite[Theorem~7.4 and Corollaries~7.5 and~7.6]{Gluck2015} it was proved that if an approximate eigenvalue $re^{i\theta}$ ($r > 0$, $\theta \in \bbR$) of a positive operator $T \in \calL(E)$ fulfils the dominated approximate eigenvector condition and if an additional regularity condition on $T$ or $E$ is satisfied, then $re^{ik\theta} \in \sigma(T)$ for all $k \in \bbZ$. Using Proposition~\ref{prop:dominated-eigenvector} we can slightly generalize those results by showing that in fact no additional regularity condition is needed:

\begin{theorem} \label{thm:dae}
	Let $E$ be a complex Banach lattice, let $T \in \calL(E)$ be positive and let $re^{i\theta}$ ($r > 0$, $\theta \in \bbR$) be an approximate eigenvalue of $T$ which fulfils the dominated approximate eigenvector condition. Then $re^{ik\theta} \in \sigma(T)$ for all $k \in \bbZ$.
	\begin{proof}
		We may assume that $r = 1$. Let $\hat T$ be the operator induced by $T$ on the space $\hat E := l^\infty(\bbN;E) / c_0(\bbN;E)$. We denote by $(z_n)_{n \in \bbN}$ and $(x_n)_{n \in \bbN}$ the approximate eigenvectors from Definition~\ref{def:dae} and by $\hat z$ and $\hat x$ their equivalence classes in $\hat E$. Then we have $|\hat z| \le \hat x$ and the eigenvalue equations $\hat T \hat z = e^{i\theta}\hat z$ and $\hat T \hat x = \hat x$ hold. Hence, Proposition~\ref{prop:dominated-eigenvector}(b) implies that $e^{ik\theta} \in \sigma(\hat T) = \sigma(T)$ for all $k \in \bbZ$.
	\end{proof}
\end{theorem}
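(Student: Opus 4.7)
My plan is to reduce the statement to Proposition~\ref{prop:dominated-eigenvector}(b) by passing to a quotient space in which approximate eigenvalues become genuine eigenvalues. First, rescale so that $r=1$; then the goal is to conclude $e^{ik\theta} \in \sigma(T)$ for every $k \in \bbZ$ from the existence of norm-bounded sequences $(z_n), (x_n) \subset E$ with $\liminf\|z_n\|, \liminf\|x_n\| > 0$, satisfying $|z_n| \le x_n$ for all $n$, and $(e^{i\theta} - T)z_n \to 0$, $(1-T)x_n \to 0$.

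The natural setting for this is the complex Banach lattice $\hat E := l^\infty(\bbN;E)/c_0(\bbN;E)$. The diagonal action of $T$ on $l^\infty(\bbN;E)$ leaves $c_0(\bbN;E)$ invariant and induces a positive operator $\hat T$ on $\hat E$; by the facts collected in the appendix on induced operators (Proposition~\ref{prop:spectrum-of-induced-operators}) we have $\sigma(\hat T) = \sigma(T)$. Passing to equivalence classes turns approximate eigenvectors into true eigenvectors: writing $\hat z$ and $\hat x$ for the classes of $(z_n)$ and $(x_n)$, the convergence conditions translate to $\hat T \hat z = e^{i\theta}\hat z$ and $\hat T \hat x = \hat x$. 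The lower bounds on $\liminf\|z_n\|$ and $\liminf\|x_n\|$ guarantee $\hat z \neq 0$ and $\hat x \neq 0$, and since $c_0(\bbN;E)$ is an ideal in $l^\infty(\bbN;E)$, the pointwise domination $|z_n| \le x_n$ descends to $|\hat z| \le \hat x$ in $\hat E$.

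At this point Proposition~\ref{prop:dominated-eigenvector}(b), applied to the positive operator $\hat T$ with peripheral eigenvalue $e^{i\theta}$, eigenvector $\hat z$, and dominating fixed vector $\hat x$, yields $e^{ik\theta} \in \sigma(\hat T)$ for every $k \in \bbZ$. Combining with $\sigma(\hat T) = \sigma(T)$ closes the argument.

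The only genuinely nontrivial step is the spectral identity $\sigma(\hat T) = \sigma(T)$, but this is precisely the kind of statement collected in the paper's appendix and used repeatedly in the second proof of Theorem~\ref{thm:growth-rate-for-eigenvector}; everything else consists of unpacking the definition of the dominated approximate eigenvector condition and checking that the quotient is compatible with the lattice operations. The conceptual content of the theorem is thus entirely absorbed into Proposition~\ref{prop:dominated-eigenvector}(b); the value of the $l^\infty/c_0$ construction is simply to convert ``approximate'' data into ``genuine'' data without having to assume any extra regularity of $T$ or $E$, which is exactly what distinguishes this statement from the earlier results in \cite{Gluck2015}.
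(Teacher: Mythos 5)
Your proposal is correct and follows essentially the same route as the paper: rescale to $r=1$, pass to $\hat E = l^\infty(\bbN;E)/c_0(\bbN;E)$, observe that the approximate eigenvectors become genuine eigenvectors of the induced operator $\hat T$ with the domination $|\hat z|\le\hat x$ preserved, and apply Proposition~\ref{prop:dominated-eigenvector}(b) together with $\sigma(\hat T)=\sigma(T)$. One small point of imprecision you share with the paper: Proposition~\ref{prop:spectrum-of-induced-operators} by itself only yields that \emph{peripheral} spectral values of a restriction or quotient lie in $\sigma(T)$, whereas what is actually needed (and true, by the standard lifting of the resolvent) is the inclusion $\sigma(\hat T)\subseteq\sigma(T)$ without a modulus restriction.
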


We point out that, compared to \cite[Theorem~7.4]{Gluck2015}, we did not employ a new technique for the proof of Theorem~\ref{thm:dae}; both theorems are essentially based on \cite[Theorem~3.2]{Gluck2015}. The reason why we obtained a slightly more general result here is that we made a short detour via Proposition~\ref{prop:dominated-eigenvector} in the proof.

The approximate eigenvalue $re^{i\theta}$ considered in Theorem~\ref{thm:dae} need not be a peripheral spectral value of $T$; yet, peripheral spectral values are particularly well-suited for an application of Theorem~\ref{thm:dae} since they are always approximate eigenvalues. On the other hand, it is important to note that, even in finite dimensions, there are positive operators $T$ with peripheral spectral values that do not fulfil the dominated approximate eigenvector condition; see \cite[Example~7.7]{Gluck2015} for a counterexample.

\section{Quadratic resolvent growth and the peripheral spectrum} \label{section:quadratic-resolvent-growth-the-peripheral-spectrum}

In this section we provide another condition for the peripheral spectrum of a positive operator to be cyclic. Compared to Theorem~\ref{thm:growth-rate-for-eigenvector} this condition appears to be a bit closer to what is known from the literature; the proof, however, it somewhat more involved than that of Theorem~\ref{thm:growth-rate-for-eigenvector} and uses some new techniques.

The following is the main result of this section. The reader should compare it to the known results listed in \ref{ov:cyclcity-problem-growth-conditions-on-resolvent}.

\begin{theorem} \label{thm:quadratic-growth-and-linear-growth}
	Let $E$ be a complex Banach lattice, let $T \in \calL(E)$ be a positive operator with spectral radius $r(T) = 1$ and let $\lambda \in \sigma_{\per}(T)$. If there is a sequence $1 < r_n \downarrow 1$ such that
	\begin{align*}
		\|R(r_n \lambda,T)\| \sim \frac{1}{r_n-1} \quad \text{and} \quad \|R(r_n,T)\| = O(\frac{1}{(r_n-1)^2}),
	\end{align*}
	then $\lambda^k \in \sigma_{\per}(T)$ for all $k \in \bbZ$.
\end{theorem}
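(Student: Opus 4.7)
The plan is to lift the problem into an ultrapower and there verify the hypotheses of Krieger's quadratic-growth result (Known Result~\ref{ov:cyclcity-problem-growth-conditions-on-resolvent}(c)). Fix a free ultrafilter $\calU$ on $\bbN$, form $\hat E := E^{\calU}$ and $\hat T := T^{\calU}$, and recall that $\sigma(\hat T) = \sigma(T)$, that every element of $\sigma(T)$ is an eigenvalue of $\hat T$, and that $\|R(r,\hat T)\| = \|R(r,T)\|$ for every $r \in \rho(T)$. Both growth hypotheses therefore transfer verbatim to $\hat T$, and I may select a unit eigenvector $\hat z \in \hat E$ of $\hat T$ for $\lambda$ as the class of an approximate eigenvector $z_n := R(r_n\lambda, T) u_n / \|R(r_n\lambda, T) u_n\|$ built from unit near-maximizers $u_n$ of $\|R(r_n\lambda, T)\|$.

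Next I would use the linear-growth hypothesis on the \emph{dual} side to manufacture a positive sub-invariant functional for $\hat T'$. Since $\|R(r_n\lambda, T')\| = \|R(r_n\lambda, T)\| = O((r_n-1)^{-1})$, the operators $(r_n-1) R(r_n\lambda, T')$ are uniformly bounded in $\calL(E')$, and the short resolvent computation $(T' - \lambda)(r_n-1) R(r_n\lambda, T') = -(r_n-1) I + \lambda(r_n-1)^2 R(r_n\lambda, T')$ shows that $(T' - \lambda)(r_n-1) R(r_n\lambda, T') \to 0$ in operator norm. Applying this family to unit seeds $x_n' \in E'$ and passing to the ultralimit produces a left-eigenvector $\hat y' \in (\hat E)'$ of $\hat T$ for $\lambda$. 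The modulus $\hat x' := |\hat y'|$ then satisfies $\hat T' \hat x' \ge |\hat T' \hat y'| = |\hat y'| = \hat x'$ by the lattice inequality $T'|y'| \ge |T' y'|$ (and $|\lambda|=1$), so $\hat x'$ is sub-invariant for $\hat T'$.

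Assuming $\langle \hat x', |\hat z| \rangle \ne 0$, all the hypotheses of Known Result~\ref{ov:cyclcity-problem-growth-conditions-on-resolvent}(c) then hold inside $\hat E$: positivity of $\hat T$, spectral radius one, the quadratic bound $\|R(r_n, \hat T)\| = O((r_n-1)^{-2})$, an eigenvector $\hat z$ at $\lambda$, and a sub-invariant functional $\hat x'$ pairing non-trivially with $|\hat z|$. Krieger's argument then yields $\lambda^k \in \sigma(\hat T) = \sigma(T)$ for every $k \in \bbZ$, and since $|\lambda| = 1 = r(T)$ these values lie in $\sigma_{\per}(T)$.

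The main obstacle is precisely establishing $\langle \hat x', |\hat z| \rangle \ne 0$. The natural approach is to choose the seed pairs $(x_n', u_n)$ in ``near-duality'' so that $|\langle x_n', z_n \rangle|$ is bounded below by a positive constant; the identity $(r_n-1) \langle x_n', R(r_n\lambda, T) z_n \rangle = \lambda^{-1}(\langle x_n', z_n \rangle - \langle x_n', R(r_n\lambda, T) \epsilon_n \rangle)$ with $\epsilon_n := (\lambda - T) z_n$ then controls the pairing of $\hat x'$ with $|\hat z|$ via the inequality $\langle |\phi|, |y|\rangle \ge |\langle \phi, y\rangle|$. The error term $\langle x_n', R(r_n\lambda, T) \epsilon_n \rangle$ is only bounded (it does not tend to zero), so the technical heart of the proof is to arrange $(x_n', u_n)$ so that the main term dominates the error uniformly in $n$, simultaneously exploiting the linear growth of $R(r_n\lambda, T)$ on $E$ and on $E'$.
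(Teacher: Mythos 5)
Your route --- ultrapower plus Krieger's quadratic-growth criterion, Known Result~\ref{ov:cyclcity-problem-growth-conditions-on-resolvent}(c) --- is genuinely different from the paper's proof of Theorem~\ref{thm:quadratic-growth-and-linear-growth}, which avoids Krieger's result and instead constructs two $T$-invariant closed ideals (via Lemma~\ref{lem:ideal} and Proposition~\ref{prop:closure-of-a-principal-ideal}) and reduces modulo them; it is closer in spirit to the paper's \emph{alternative} proof of Corollary~\ref{cor:quadratic-growth-and-poles-poles-of-the-resolvent}, where the sub-invariant functional is produced from the dual spectral projection. The gap you flag is, however, real, and the identity you propose cannot bridge it: using $R_n(\lambda-T)z_n = -(r_n-1)\lambda R_n z_n + z_n$ (with $R_n := R(r_n\lambda,T)$), the ``error term'' $\langle x_n', R_n\epsilon_n\rangle$ equals $-(r_n-1)\lambda\langle x_n', R_n z_n\rangle + \langle x_n', z_n\rangle$, so your displayed identity collapses to the tautology $(r_n-1)\langle x_n', R_n z_n\rangle = (r_n-1)\langle x_n', R_n z_n\rangle$. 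There is no independent ``main term'' that could be arranged to dominate; concretely, for near-maximizers $u_n$ there is in general no lower bound of order $1/(r_n-1)$ on $\|R_n^2 u_n\|/\|R_n u_n\|$, so the pairing $\langle\hat x',|\hat z|\rangle$ can genuinely degenerate with your choices of seeds.

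The gap closes if you stop manufacturing approximate eigenvectors on both sides. After passing to the ultrapower $\hat E$, take a genuine eigenvector $z$ of $\hat T$ for $\lambda$ and any \emph{fixed} $\phi\in(\hat E)'$ with $\langle\phi,z\rangle\neq 0$, and set $y_n' := (r_n-1)R(r_n\lambda,\hat T')\phi$. By the linear-growth hypothesis this net is norm-bounded and $(\hat T'-\lambda)y_n' = -(r_n-1)\phi + \lambda(r_n-1)^2 R(r_n\lambda,\hat T')\phi \to 0$ in norm; moreover $R(r_n\lambda,\hat T)z = z/((r_n-1)\lambda)$, so $\langle y_n',z\rangle = \langle\phi,z\rangle/\lambda$ is a nonzero constant. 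Any weak${}^*$ cluster point $\hat y'$ therefore satisfies $\hat T'\hat y' = \lambda\hat y'$ and $\langle\hat y',z\rangle\neq 0$, and $\hat x' := |\hat y'|$ fulfils $0\le\hat x'\le\hat T'\hat x'$ and $\langle\hat x',|z|\rangle \ge |\langle\hat y',z\rangle| > 0$. Known Result~\ref{ov:cyclcity-problem-growth-conditions-on-resolvent}(c) applied to $\hat T$ then yields the theorem. This repaired argument is shorter than the paper's Steps 1--6, at the cost of resting on Krieger's criterion~(c) rather than being self-contained; the paper's ideal-reduction technique buys self-containment and the new Lemma~\ref{lem:ideal}/Proposition~\ref{prop:closure-of-a-principal-ideal} machinery, which the author reuses elsewhere.
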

	
Note that the assumption $\|R(r_n \lambda,T)\| \sim \frac{1}{r_n-1}$ in the above theorem is fulfilled if and only if $\|R(r_n \lambda,T)\| = O(\frac{1}{r_n-1})$ according to estimate (\ref{form_canonical_resolvent_estimate}) at the beginning of Section~\ref{section:overview-cycl-problem}. Before we proceed with the proof of Theorem~\ref{thm:quadratic-growth-and-linear-growth} we state a corollary and an additional remark. In case that $\lambda$ is a pole of the resolvent, the first growth condition in Theorem~\ref{thm:quadratic-growth-and-linear-growth} can be omitted:

\begin{corollary} \label{cor:quadratic-growth-and-poles-poles-of-the-resolvent}
	Let $E$ be a complex Banach lattice, let $T \in \calL(E)$ be a positive operator with spectral radius $r(T) = 1$ and let $\lambda \in \sigma_{\per}(T)$ be a pole of the resolvent $R(\phdot,T)$. If there is a sequence $1 < r_n \downarrow 1$ such that
	\begin{align*}
		\|R(r_n,T)\| = O(\frac{1}{(r_n-1)^2}),
	\end{align*}
	then $\lambda^k \in \sigma_{\per}(T)$ for all $k \in \bbZ$.
	\begin{proof}
		It follows from the resolvent estimate in the assumption and from the positivity of $T$ that the pole order of $R(\phdot,T)$ at $\lambda$ is either $1$ or $2$. If the pole order equals $1$, then the assertion follows from Theorem~\ref{thm:quadratic-growth-and-linear-growth}. If, on the other hand, the pole order equals $2$, then we have $\|R(r_n\lambda,T)\| \sim \|R(r_n,T)\|$ according to estimate (\ref{form_canonical_resolvent_estimate}) at the beginning of Section~\ref{section:overview-cycl-problem} and thus the corollary follows from assertion~\ref{ov:cyclcity-problem-growth-conditions-on-resolvent}(b).
	\end{proof}
\end{corollary}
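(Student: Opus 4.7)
The natural strategy is to apply Theorem~\ref{thm:dae}, which requires an approximate eigenvector $(z_n)$ for $\lambda$ and a pointwise-dominating approximate eigenvector $(x_n)$ for $1$. From the hypothesis $\|R(r_n\lambda,T)\| \sim 1/(r_n-1)$ there exist unit vectors $y_n \in E$ with $\|R(r_n\lambda,T)y_n\| \ge c/(r_n-1)$ for some $c > 0$; I set $z_n := (r_n-1)R(r_n\lambda,T)y_n$, so that $\|z_n\|$ is bounded above and below by positive constants, and the identity $(r_n\lambda - T)z_n = (r_n-1)y_n \to 0$ makes $(z_n)$ an approximate eigenvector for $\lambda$. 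The natural candidate for the dominating sequence is $x_n := (r_n-1)R(r_n,T)|y_n|$: since $|\lambda|=1$ and $T$ is positive, the termwise estimate $|R(r_n\lambda,T)y| \le R(r_n,T)|y|$ for the Neumann series immediately gives $|z_n| \le x_n$. A short resolvent computation yields $(1-T)x_n = (r_n-1)|y_n| - (r_n-1)x_n$, so $\|(1-T)x_n\| \le (r_n-1)(1+\|x_n\|)$.

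The difficulty is that the quadratic growth hypothesis only bounds $\|x_n\| \le (r_n-1)\|R(r_n,T)\| = O(1/(r_n-1))$, so $(x_n)$ need not be a bounded sequence. If, possibly after refining the choice of $y_n$ or passing to a subsequence, one can arrange $\sup_n \|x_n\| < \infty$, then the preceding estimate forces $\|(1-T)x_n\| \to 0$, and $(x_n)$ is a genuine approximate eigenvector for $1$ pointwise dominating $(z_n)$. Theorem~\ref{thm:dae} then gives $\lambda^k \in \sigma(T)$ for every $k \in \bbZ$, which is the desired conclusion.

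The main obstacle is the complementary case $\|x_n\| \to \infty$. Here the renormalization $\tilde x_n := x_n/\|x_n\|$ is still an approximate eigenvector for $1$ (because $\|x_n\| \ge \|z_n\| \ge c > 0$ yields $\|(1-T)\tilde x_n\| \le (r_n-1)(1/\|x_n\|+1) \to 0$), but it only dominates $|z_n|/\|x_n\|$, which tends to $0$ in norm, so the naive dominated approximate eigenvector condition is lost. To rescue the argument I would pass to the quotient Banach lattice $l^\infty(\bbN;E)/c_0(\bbN;E)$ in the spirit of the second proof of Theorem~\ref{thm:growth-rate-for-eigenvector}: there $[(z_n)]$ is a genuine eigenvector of the induced operator $\hat T$ for $\lambda$, and a variant of the ideal-quotient construction used in Theorem~\ref{thm:growth-rate-for-eigenvector}(c) should, when combined with the quadratic resolvent bound, produce a $\hat T$-fixed element in a suitable further quotient that dominates the class of $|[(z_n)]|$. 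Proposition~\ref{prop:dominated-eigenvector}(b) applied there, together with Proposition~\ref{prop:spectrum-of-induced-operators}(c), would then deliver $\lambda^k \in \sigma(T)$ for each $k \in \bbZ$. I expect this step to be the crux of the proof, since it is precisely here that the two growth hypotheses must be made to interact: the first forces $(z_n)$ to have a controlled lower norm, while the second is needed to bound the growth of the would-be dominator along the renormalization.
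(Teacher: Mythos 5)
Your proposal opens by treating the estimate $\|R(r_n\lambda,T)\| \sim \frac{1}{r_n-1}$ as one of the given hypotheses, but that estimate belongs to Theorem~\ref{thm:quadratic-growth-and-linear-growth}, not to Corollary~\ref{cor:quadratic-growth-and-poles-poles-of-the-resolvent}. The corollary assumes only that $\lambda$ is a pole of $R(\phdot,T)$ and that $\|R(r_n,T)\| = O(\frac{1}{(r_n-1)^2})$. Under these assumptions, estimate~(\ref{form_canonical_resolvent_estimate}) and the positivity of $T$ force the pole order at $\lambda$ to be $1$ or $2$; in the order-two case (which does occur, e.g.\ for a $2\times 2$ Jordan block with eigenvalue $1$) one has $\|R(r_n\lambda,T)\| \sim \frac{1}{(r_n-1)^2}$, so your starting assumption is false. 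You never invoke the pole hypothesis to justify the linear growth you postulate, nor do you consider the order-two alternative at all. In effect you have tried to prove Theorem~\ref{thm:quadratic-growth-and-linear-growth} rather than the corollary --- and that attempt is itself left unfinished, since the closing passage (``I would pass to the quotient $\ldots$ should $\ldots$ produce $\ldots$'') is a plan rather than an argument.

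The intended proof of the corollary is a short dichotomy on the pole order. If the order is $1$, then $\|R(r_n\lambda,T)\| \sim \frac{1}{r_n-1}$ holds automatically and Theorem~\ref{thm:quadratic-growth-and-linear-growth} applies. If the order is $2$, then estimate~(\ref{form_canonical_resolvent_estimate}) together with the quadratic bound yields $\|R(r_n\lambda,T)\| \sim \|R(r_n,T)\|$, and the conclusion follows from the known result~\ref{ov:cyclcity-problem-growth-conditions-on-resolvent}(b). That reduction by pole order --- the entire content of the corollary relative to the theorem it invokes --- is missing from your proposal.
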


Compared to assertion~\ref{ov:cyclicity-problem-other-conditions}(b), the novelty in Corollary~\ref{cor:quadratic-growth-and-poles-poles-of-the-resolvent} is that we assume $\lambda$ rather than $r(T)$ to be a pole of the resolvent (note however that assertion~\ref{ov:cyclicity-problem-other-conditions}(b) does not require any additional growth condition on the resolvent).

We should point out that one can also prove Corollary~\ref{cor:quadratic-growth-and-poles-poles-of-the-resolvent} without our new Theorem~\ref{thm:quadratic-growth-and-linear-growth}, only by using known results from \ref{ov:cyclcity-problem-growth-conditions-on-resolvent}. Yet, the assertion of Corollary~\ref{cor:quadratic-growth-and-poles-poles-of-the-resolvent} does not seem to have appeared in the literature, yet. Let us demonstrate how the corollary can also be derived from assertions~\ref{ov:cyclcity-problem-growth-conditions-on-resolvent}(b) and (c):

\begin{proof}[Alternative proof of Corollary~\ref{cor:quadratic-growth-and-poles-poles-of-the-resolvent}]
	Again, we observe that $\lambda$ can only be a pole of order one or two and in the latter case, the corollary follows from assertion~\ref{ov:cyclcity-problem-growth-conditions-on-resolvent}(b). We may thus assume that $\lambda$ is a first order pole of the resolvent.

	Let $P$ denote the spectral projection of $T$ associated with the isolated spectral value $\lambda$ and choose an eigenvector $0 \not= z \in \ker(\lambda - T)$; then we have $z \in PE$. Also choose a functional $z' \in E'$ such that $\langle z', z \rangle \not= 0$. Since $\lambda$ is also a first order pole of the resolvent of the adjoint operator $T'$ and since the corresponding spectral projection is given by $P'$, the range $P'E'$ coincides with $\ker(\lambda-T')$; in particular, $T' \, P'z' = \lambda \, P'z'$ and this implies that $x' := |P'z'|$ fulfils $0 \le x' \le T'x'$. Moreover, using that $Pz=z$, we obtain
	\begin{align*}
		0 < |\langle z',z\rangle| = |\langle P'z',z\rangle| \le \langle x', |z| \rangle.
	\end{align*}
	Hence, assertion~\ref{ov:cyclcity-problem-growth-conditions-on-resolvent}(c) ensures that $\lambda^k \in \sigma(T)$ for all $k\in \bbZ$.
\end{proof}

Let us recall a simple criterion which ensures that the quadratic resolvent growth condition $\|R(r_n,T)\| = O(\frac{1}{(r_n-1)^2})$ in Theorem \ref{thm:quadratic-growth-and-linear-growth} and Corollary~\ref{cor:quadratic-growth-and-poles-poles-of-the-resolvent} is fulfilled:

\begin{remark}
	Let $E$ be a complex Banach space and and let $T \in \calL(E)$ be an operator with spectral radius $r(T) = 1$. If $M := \sup_{n \in \bbN} \frac{\|T^n\|}{n} < \infty$, then
	\begin{align*}
		\|R(r,T)\| = O(\frac{1}{(r-1)^2}) \quad \text{as} \quad r \downarrow 1.
	\end{align*}
	\begin{proof}
		This follows easily from the Neumann series representation of the resolvent.
	\end{proof}
\end{remark}

The remainder of this section is devoted to the proof of Theorem~\ref{thm:quadratic-growth-and-linear-growth}. We start by providing a few ingredients for the proof, the first of which is the following lemma.

\begin{lemma} \label{lem:ideal}
	Let $E$ be a complex Banach lattice and let $(S_n) \subset \calL(E)$ be a (not necessarily norm bounded) sequence of positive operators on $E$.
	\begin{itemize}
		\item[(a)] The set
			\begin{align*}
				I := \{x \in E| \, \exists (x_n) \subset E: \; x_n \to x \text{ and } S_n|x_n| \to 0\}
			\end{align*}
			is a closed ideal in $E$. 
		\item[(b)] If $T \in \calL(E)$ is a positive operator which commutes with all operators $S_n$, then $I$ is $T$-invariant.
	\end{itemize}
	\begin{proof}
		(a) Obviously $I$ is a vector subspace of $E$. Let us prove that it is even an ideal: If $x \in I$ and $(x_n) \subset E$ is a sequence which fulfils $x_n \to x$ and $S_n|x_n| \to 0$, then we have $|x_n| \to |x|$ and $S_n\big| | x_n | \big| = S_n|x_n| \to 0$, so $|x|$ is also contained in $I$. Now, assume $0 \le y \le x \in I$ and let $(x_n)$ be as before. Then we first observe that $(\re x_n)^+ \to (\re x)^+ = x$. Now, define $y_n := y \land (\re x_n)^+$; we obtain $y_n \to y \land x = y$ and $0 \le S_n|y_n| = S_ny_n \le S_n(\re x_n)^+ \le S_n|x_n| \to 0$. Hence, $y \in I$ and we have thus proved that $I$ is an ideal. 
		
		To see that $I$ is closed, let $(x^{(k)})$ be a sequence in $I$ which converges to a vector $x \in E$. For each index $k$ there is a sequence $(x_n^{(k)})$ which converges to $x^{(k)}$ as $n \to \infty$ and which fulfils $S_n|x_n^{(k)}| \to 0$ as $n \to \infty$. Now we can find a sequence of indices $(k_l)_{l \in \bbN}$ and a strictly increasing sequence of indices $(n_l)_{l \in \bbN}$ with the following properties:
		\begin{itemize}
			\item[(i)] $\|x-x^{(k_l)}\| \le \frac{1}{2l}$ for each $l \in \bbN$. 
			\item[(ii)] $\|x^{(k_l)} - x_n^{(k_l)}\| \le \frac{1}{2l}$ for each $l \in \bbN$ and each $n \ge n_l$.
			\item[(iii)] $\|S_n\, |x_n^{(k_l)}|\, \| \le \frac{1}{l}$ for each $l \in \bbN$ and each $n \ge n_l$.
		\end{itemize}
		Let us define a sequence $(x_n)_{n \in \bbN}$ in the following way: We set $x_n = 0$ for $1 \le n \le n_1-1$ and $x_n = x_n^{(k_l)}$ for $n_l \le n \le n_{l+1}-1$. We have to show that $x_n \to x$ and that $S_n|x_n| \to 0$, so let $\varepsilon > 0$. Choose $l_0 \in \bbN$ such that $\frac{1}{l_0} < \varepsilon$ and let $n \ge n_{l_0}$. Then we can find an integer $l \ge l_0$ such that $n_l \le n \le n_{l+1}-1$. We therefore conclude from (i) and (ii) that
		\begin{align*}
			\|x - x_n\| = \|x - x_n^{(k_l)}\| \le \|x-x^{(k_l)}\| + \|x^{(k_l)} - x_n^{(k_l)}\| \le \frac{1}{l} \le \frac{1}{l_0} < \varepsilon
		\end{align*}
		and  we conclude from (iii) that
		\begin{align*}
			\|S_n \, |x_n| \,\| = \|S_n \, |x_n^{(k_l)}| \, \| \le \frac{1}{l} \le \frac{1}{l_0} < \varepsilon.
		\end{align*}
		This proves that $x_n \to x$ and $S_n|x_n| \to 0$, so $x$ is indeed contained in $I$.
		
		(b) Assertion (b) is obvious.
	\end{proof}
\end{lemma}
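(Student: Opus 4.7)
The plan is to verify the three defining properties of a closed ideal in turn --- linearity, the solidity (order-ideal) condition, and topological closedness --- and then to handle the $T$-invariance in part (b) as a quick consequence of positivity of $T$ combined with the commutation hypothesis. None of these steps uses anything beyond elementary Banach-lattice arithmetic (sub-additivity of the modulus, continuity of the lattice operations, positivity of the $S_n$), together with a single diagonal-selection argument for closedness.

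For part (a), linearity is immediate: if $(x_n), (y_n)$ are witness sequences for $x, y \in I$, then $x_n + y_n \to x + y$ with $0 \le S_n|x_n + y_n| \le S_n|x_n| + S_n|y_n| \to 0$, and scalar multiples are trivial. To see that $I$ is a solid subspace, the first step is to verify that $|x| \in I$ whenever $x \in I$, using the witness $(|x_n|)$ and continuity of the modulus. Decomposing a general vector into real and imaginary and then into positive and negative parts, it then suffices to prove: if $x \in I \cap E_+$ and $0 \le y \le x$, then $y \in I$. Given such an $x$ with witness $(x_n) \subset E$, I would set $y_n := y \land (\re x_n)^+$; since $x_n \to x$ and $x \ge 0$ imply $(\re x_n)^+ \to (\re x)^+ = x$, continuity of the lattice operations forces $y_n \to y \land x = y$, while the pointwise bound $0 \le y_n \le (\re x_n)^+ \le |x_n|$ combined with positivity of $S_n$ yields $S_n y_n \le S_n|x_n| \to 0$.

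Closedness is the step that requires a little more care. Starting from a convergent sequence $(x^{(k)})$ in $I$ with limit $x \in E$, I would fix for each $k$ a witness sequence $(x_n^{(k)})_n$ and diagonalize: pass to a subsequence $(x^{(k_l)})$ with $\|x - x^{(k_l)}\| \le 1/(2l)$, and for each $l$ choose a strictly increasing sequence of indices $n_l$ large enough that both $\|x^{(k_l)} - x_n^{(k_l)}\| \le 1/(2l)$ and $\|S_n|x_n^{(k_l)}|\| \le 1/l$ hold for every $n \ge n_l$. Defining $x_n := x_n^{(k_l)}$ on each block $n_l \le n < n_{l+1}$ (and setting $x_n := 0$ for $n < n_1$) then yields a sequence satisfying $x_n \to x$ and $S_n|x_n| \to 0$, i.e., a witness for $x \in I$.

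For part (b), positivity of $T$ together with $T S_n = S_n T$ gives the argument in one line: if $(x_n)$ is a witness for $x \in I$, then $T x_n \to T x$ by continuity of $T$, while $0 \le S_n|T x_n| \le S_n T |x_n| = T S_n|x_n| \to 0$ by boundedness of $T$. Hence $T x \in I$. The only real obstacle in the whole argument is stating the diagonal selection in the closedness step cleanly; everything else is routine Banach-lattice bookkeeping.
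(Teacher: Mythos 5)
Your proof is correct and follows the same strategy as the paper: the $y_n := y \land (\re x_n)^+$ trick for solidity and the diagonal selection over blocks $[n_l, n_{l+1})$ for closedness are exactly the paper's argument. You are slightly more explicit in a few places where the paper just says ``obvious'' (linearity of $I$, the reduction to the positive case, and part (b)), which is fine.
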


A few remarks on Lemma~\ref{lem:ideal} are in order:

\begin{remarks}
	Let $E$ and $(S_n)$ be as in Lemma~\ref{lem:ideal}.
	
	(a) The set $J := \{x \in E| \, S_n|x| \to 0\}$ is also an ideal in $E$. However, if the operator sequence $(S_n)$ is not norm bounded in $\calL(E)$, then $J$ might not be closed, in general.
	
	(b) Assume now that $(S_n)$ is norm bounded. Then the ideal $J$ from remark (a) is not only closed, but it also coincides with the ideal $I$ from Lemma~\ref{lem:ideal}.
	
	(c) If $(S_n)$ is not norm bounded, then we clearly have $J \subset \overline{J} \subset I$. The author does not know whether $I$ coincides with $\overline{J}$, in general. 
\end{remarks}

The other ingredient that we need for the proof of Theorem~\ref{thm:quadratic-growth-and-linear-growth} is the following simple observation about the closure of principal ideals.

\begin{proposition} \label{prop:closure-of-a-principal-ideal}
	Let $E$ be a real or complex Banach lattice, let $0 \le x,y \in E$. Then the following assertions are equivalent:
	\begin{itemize}
		\item[(i)] $x \in \overline{E_y}$.
		\item[(ii)] $\|(y-sx)^-\| = o(s)$ as $s \downarrow 0$.
	\end{itemize}
	\begin{proof}
		``(i) $\Rightarrow$ (ii)'' Assume that $x \in \overline{E_y}$ and let $\varepsilon > 0$. Since the principal ideal $E_y$ is dense in $E$ we can find a number $c > 0$ and an element $z \in E$ such that $|z| \le c y$ and such that $\|z-x\| < \varepsilon$. In particular, $\| |z| - x \| <\varepsilon$. For all sufficiently small $s > 0$ we have $c \le \frac{1}{s}$ and hence $|z| \le \frac{y}{s}$; we thus obtain
		\begin{align*}
			\frac{\|(y-sx)^-\|}{s} = \|(\frac{y}{s} - x)^-\| \le \|(\frac{y}{s} - |z|)^-\| + \| |z| - x\| < \varepsilon
		\end{align*}
		for all sufficiently small $s > 0$. This proves that $\frac{\|(y-sx)^-\|}{s} \to 0$ as $s \downarrow 0$.
		
		``(ii) $\Rightarrow$ (i)'' Assume that (ii) holds and let $\varepsilon > 0$. For a sufficiently small $s > 0$ we have $\|(\frac{y}{s}-x)^-\| = \frac{\|(y-sx)^-\|}{s} < \varepsilon$ according to (ii). Note that we have $\frac{y}{s} \in E_y$ and $0 \le (\frac{y}{s}-x)^+ \le \frac{y}{s}$; hence, the vector $z := \frac{y}{s} - (\frac{y}{s}-x)^+$ is contained in $E_y$. Since $\|z-x\| = \|(\frac{y}{s}-x)^-\| < \varepsilon$, this proves that $x \in \overline{E_y}$.
	\end{proof}
\end{proposition}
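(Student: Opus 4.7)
The plan is to verify the two implications directly from the characterization of the principal ideal $E_y = \{z \in E : |z| \le cy \text{ for some } c > 0\}$, using only elementary lattice operations.

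For (i) $\Rightarrow$ (ii), I would fix $\varepsilon > 0$ and pick $z \in E_y$ with $\|z-x\| < \varepsilon$. Since $x \ge 0$, the vector $|z|$ also lies in $E_y$ and satisfies $\||z| - x\| \le \|z - x\| < \varepsilon$, so I may assume from the outset that $z \ge 0$ with $z \le cy$ for some $c > 0$. For $s > 0$ small enough that $cs \le 1$ I then have $y/s \ge z$, whence $(x - y/s)^+ \le (x - z)^+ \le |x - z|$, giving
\[ \|(y - sx)^-\|/s = \|(y/s - x)^-\| \le \|z - x\| < \varepsilon, \]
which is the desired $o(s)$-estimate.

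For (ii) $\Rightarrow$ (i), I would reverse the calculation. Given $\varepsilon > 0$, use (ii) to pick $s > 0$ small enough that $\|(y/s - x)^-\| < \varepsilon$, and set $z := (y/s) \wedge x$. Then $0 \le z \le y/s$, so $z \in E_y$, and the identity $x - (y/s) \wedge x = (x - y/s)^+ = (y/s - x)^-$ shows $\|x - z\| < \varepsilon$. Hence $x \in \overline{E_y}$.

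The whole argument is routine Banach lattice manipulation; the only subtlety I foresee is the use of $x \ge 0$ in the first direction, which lets me replace an arbitrary approximating element of $E_y$ by its modulus so that the bound $|z| \le cy$ converts into $z \le y/s$ for small $s$. I do not expect any substantive obstacle beyond this bookkeeping.
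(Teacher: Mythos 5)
Your proof is correct and follows essentially the same route as the paper: in both directions you compare $x$ against $y/s$, using a $y$-dominated approximant $|z| \le cy$ for (i)$\Rightarrow$(ii) and the approximant $(y/s)\wedge x = y/s - (y/s-x)^+$ for (ii)$\Rightarrow$(i), which is exactly the element the paper uses. The only cosmetic difference is that in (i)$\Rightarrow$(ii) you bound $(x-y/s)^+ \le (x-|z|)^+ \le |x-|z||$ by order monotonicity, whereas the paper invokes the contraction property of $(\,\cdot\,)^-$ via a triangle inequality; both are one-line lattice estimates and the argument is the same.
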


We note in passing that Proposition~\ref{prop:closure-of-a-principal-ideal} has the following corollary which is interesting in its own right (compare also \cite[Theorem~II.6.3]{Schaefer1974}).

\begin{corollary} \label{cor:approximation-of-quasi-interior-point}
	Let $E$ be a real or complex Banach lattice and $y \in E_+$. Then the following assertions are equivalent:
	\begin{itemize}
		\item[(i)] $y$ is a quasi-interior point of $E_+$.
		\item[(ii)] For every $x \ge 0$ we have $\|(y-sx)^-\| = o(s)$ as $s \downarrow 0$.
	\end{itemize}
\end{corollary}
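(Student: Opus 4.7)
The corollary is a direct consequence of Proposition~\ref{prop:closure-of-a-principal-ideal} combined with the definition of a quasi-interior point, so the plan is essentially a two-line translation.

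First I would recall that, by definition, $y \in E_+$ is a quasi-interior point of $E_+$ if and only if the principal ideal $E_y$ is dense in $E$, i.e.\ $\overline{E_y} = E$. The key observation is that density of $E_y$ is equivalent to $\overline{E_y} \supseteq E_+$: indeed, $\overline{E_y}$ is a closed vector subspace, and every element of a (real or complex) Banach lattice is a linear combination of positive elements via $x = x^+ - x^-$ in the real case and the additional decomposition $z = \re z + i \im z$ in the complex case. Hence $\overline{E_y} \supseteq E_+$ automatically forces $\overline{E_y} = E$.

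With this reformulation in hand, the equivalence is immediate from Proposition~\ref{prop:closure-of-a-principal-ideal}. For the implication (i) $\Rightarrow$ (ii), if $y$ is quasi-interior then for every $x \in E_+$ we have $x \in E = \overline{E_y}$, so the proposition yields $\|(y-sx)^-\| = o(s)$ as $s \downarrow 0$. Conversely, for (ii) $\Rightarrow$ (i), the hypothesis together with the proposition gives $x \in \overline{E_y}$ for every $x \in E_+$; by the observation above this implies $\overline{E_y} = E$, so $y$ is a quasi-interior point.

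There is essentially no obstacle here; the content is all in Proposition~\ref{prop:closure-of-a-principal-ideal}, and the only thing to be careful about is the trivial but necessary remark that a closed subspace containing $E_+$ already equals $E$, which handles both the real and the complex case.
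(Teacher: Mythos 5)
Your proof is correct and is exactly the argument the paper intends: the paper explicitly labels this a corollary of Proposition~\ref{prop:closure-of-a-principal-ideal} and leaves the derivation to the reader, and your two observations (quasi-interior $\Leftrightarrow$ $\overline{E_y}=E$, and a closed subspace containing $E_+$ equals $E$) supply precisely the missing glue.
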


We are now ready to prove Theorem~\ref{thm:quadratic-growth-and-linear-growth}:

\begin{proof}[Proof of Theorem~\ref{thm:quadratic-growth-and-linear-growth}]
	Replacing $E$ by an ultra power we may assume that the peripheral spectrum of $T$ consists of eigenvalues. Throughout the proof, let $z$ be an eigenvector of $T$ for the eigenvalue $\lambda$ and observe that $T|z| \ge |z|$. We divide the proof into several steps.
	
	\emph{Step 1. We construct a $T$-invariant, closed ideal $F \subset E$:} To this end, abbreviate $S := R(2,T)$, define the vector $y := S(T|z| - |z|) \ge 0$ and observe that the principal ideal $E_y$ is $T$-invariant (more generally, it is easily seen that the principal ideal $E_{Sx}$ is $T$-invariant for every $x \ge 0$). We set $F := \overline{E_y}$, and this is of course a $T$-invariant ideal, too. Observe that $F$ is also $S$-invariant by Proposition~\ref{prop:spectrum-of-induced-operators}(b).
	
	\emph{Step 2. We show that the conclusion of our theorem holds if $z \not\in F$; this is done by reducing $T$ modulo $F$:} Assume that $z \not\in F$. Denote by $T_/$ the operator induced by $T$ on the quotient space $E/F$ and observe that the two vectors $z+F$ and $S|z| +F \ge |z| + F$ are non-zero elements of $E/F$. Therefore, the first vector $z+F$ is an eigenvector of $T_/$ for the eigenvalue $\lambda$ and the second vector $S|z|+F$ is an eigenvector of $T_/$ for the eigenvalue $1$, since $TS|z| - S|z| = y \in F$. For every $k \in \bbZ$ we thus obtain $\lambda^k \in \sigma(T_/)$ by Proposition~\ref{prop:dominated-eigenvector}(b) and hence $\lambda^k \in \sigma(T)$ by Proposition~\ref{prop:spectrum-of-induced-operators}(c).
	
	Due to Step 2 we may assume for the rest of the proof that $z \in F$.
	
	\emph{Step 3. We introduce another $T$-invariant closed ideal $I \subset E$.} More precisely, we define
	\begin{align*}
		I := \{x \in E| \, \exists (x_n) \subset E: \; x_n \to x \text{ and } (r_n-1)R(r_n,T)|x_n| \to 0\}.
	\end{align*}
	By Lemma~\ref{lem:ideal} $I$ is indeed a closed, $T$-invariant ideal in $E$. 
	
	\emph{Step 4. We show that $z \not \in I$:} To this end, consider an arbitrary sequence $(x_n) \subset E$ with $x_n \to z$. Then we obtain the estimate
	\begin{align*}
		(r_n-1) R(r_n,T)|x_n| & \ge (r_n-1)|R(r_n\lambda,T)x_n| \ge \\
		& \ge (r_n-1) |R(r_n\lambda,T)z| - (r_n-1)|R(r_n\lambda,T)(z-x_n)| = \\
		& = |z| - (r_n-1)|R(r_n\lambda,T)(z-x_n)| \to |z| \not= 0;
	\end{align*}
	in the last line we used that the sequence $(r_n-1)\|R(r_n\lambda,T)\|$ is bounded by assumption. Hence, $(r_n-1)R(r_n,T)|x_n| \not\to 0$ and we conclude that $z \not\in I$ as claimed. 
	
	\emph{Step 5. We show that $y \in I$:} Let $y_n := S(T-r_n)|z|$. Then we have
	\begin{align*}
		y_n = ST|z| - r_nS|z| = y - (r_n-1)S|z| \to y.
	\end{align*}
	Since $y_n \to y$, we only have to show that $(r_n-1)R(r_n,T)|y_n| \to 0$. To obtain this, we first observe that $|y_n| = y_n + 2y_n^-$ and that $\|y_n^-\| = o(r_n-1)$. Indeed, according to Step 2 we may assume that $z \in F$ and hence that $S|z| \in F = \overline{E_y}$; this implies $y_n^- = o(r_n-1)$ by Proposition~\ref{prop:closure-of-a-principal-ideal}. We now compute
	\begin{align*}
		(r_n-1)R(r_n,T)|y_n| & = (r_n-1)R(r_n,T) S(T-r_n)|z| + 2(r_n-1)R(r_n,T)y_n^- = \\
		& = -(r_n-1)S|z| + 2(r_n-1)R(r_n,T)y_n^-.
	\end{align*}
	The first part of the latter expression clearly converges to $0$, and for the second part we obtain
	\begin{align*}
		2(r_n-1)R(r_n,T)y_n^- = 2(r_n-1)^2 R(r_n,T)\frac{y_n^-}{r_n-1} \to 0
	\end{align*}
	since $\|R(r_n,T)\| = O(\frac{1}{(r_n-1)^2})$ by assumption and since $\|y_n^-\| = o(r_n-1)$. Hence, $y \in I$.
	
	\emph{Step 6. We prove the conclusion of our theorem by reducing $T$ modulo $I$:} Let $T_/$ be the operator induced by $T$ on the quotient space $E/I$. According to Step 4, the two vectors $z+I$ and $S|z| + I \ge |z| + I$ are non-zero elements of $E/I$. The first vector $z +I$ is thus an eigenvector of $T_/$ for the eigenvalue $\lambda$; the second vector $S|z| + I$ is an eigenvector of $T_/$ for the eigenvalue $1$, since $TS|z|-S|z| = y \in I$ according to Step 5.
	
	For every $k \in \bbZ$ we thus conclude that $\lambda^k \in \sigma(T_/)$ by Proposition~\ref{prop:dominated-eigenvector}(b) and hence, $\lambda^k \in \sigma(T)$ by Proposition~\ref{prop:spectrum-of-induced-operators}(c).
\end{proof}

It is worthwhile pointing out that the above proof would become much simpler if we even assumed that $\|R(r_n,T)\| = o(\frac{1}{(r_n-1)^2})$. Let us sketch briefly how this works:

\begin{proof}[Simplified proof of Theorem~\ref{thm:quadratic-growth-and-linear-growth} under an additional assumption.] Suppose that, in addition to the assumptions of Theorem~\ref{thm:quadratic-growth-and-linear-growth}, we even have $\|R(r_n,T)\| = o(\frac{1}{(r_n-1)^2})$. As in the original proof, let $z \in E$ be an eigenvector of $T$ for the eigenvalue $\lambda$.

	\emph{Steps 1 and 2}. Steps 1 and 2 of the original proof are no longer needed.
	
	\emph{Steps 3 and 4}. Steps 3 and 4 are the same as in the original proof.
	
	\emph{Step 5.} In contrast to the original proof one can now simply define $y := T|z| - |z|$. Using the strengthened assumption $\|R(r_n,T)\| = o(\frac{1}{(r_n-1)^2})$ one readily verifies that $(r_n-1)R(r_n,T)|y| \to 0$. Hence, $y \in I$.
	
	\emph{Step 6.} As in the original proof one considers the operator $T_/ \in \calL(E/I)$ which is induced by $T$. By Steps 4 and 5, $z+I$ and $|z|+I$ are eigenvectors of $T_/$ for the eigenvalues $\lambda$ and $1$, respectively. Hence, $\lambda^k \in \sigma(T_/)$ and thus $\lambda^k \in \sigma(T)$ for each $k \in \bbZ$ (due to Propositions~\ref{prop:dominated-eigenvector}(b) and~\ref{prop:spectrum-of-induced-operators}(c)).
\end{proof}

\section{Invariant ideals for (WS)-bounded operators} \label{section:invariant-ideals-for-ws-bounded-operators}

The aim of this final section is twofold: First, we want to give a new proof of \cite[Theorem~7.1]{Gluck2015} which asserts that every so-called \emph{(WS)-bounded} positive operator on a Banach lattice has cyclic peripheral spectrum. Second, we want generalize some known results about Abel bounded, irreducible operators to (WS)-bounded, irreducible operators. Our approach to both aims is based on an auxiliary result about invariant ideals of (WS)-bounded operators in Proposition~\ref{prop:invariant-ideal-for-ws-bounded-operators} below. Before we state and prove this proposition, we recall from \cite[Section~4]{Gluck2015} the definition of (WS)-bounded operators and the underlying notion of \emph{weighting schemes}.

Let $\overline{\bbD} \subset \bbC$ be the closed unit disk; a function $f: \overline{\bbD} \to \bbC$ is called \emph{analytic} if it has an analytic extension to some open neighbourhood of $\overline{\bbD}$. The following definition stems from \cite[Definition~4.1]{Gluck2015}.

\begin{definition} \label{def:weighting-scheme}
	A net $(f_j)_{j \in J}$  of analytic functions $f_j: \overline{\bbD} \to \bbC$ is called a \emph{weighting scheme} if the following three conditions are fulfilled:
	\begin{description}
		\item[(WS1)] $f_j(1) = 1$ for all $j \in J$.
		\item[(WS2)] $f_j^{(k)}(0) \ge 0$ for all $j \in J$ and all $k \in \bbN_0$. 
		\item[(WS3)] $|f_j(z)| \overset{j}{\to} 0$ for all $z \in \bbC$ with $|z| < 1$.
	\end{description}
\end{definition}

Note that every subnet of a weighting scheme is itself a weighting scheme. Typical examples of weighting schemes are $f_j(z) = z^j$ for $j \in \bbN_0$, $f_j(z) = \frac{1}{j+1}\sum_{k=0}^jz^k$ for $j \in \bbN_0$ and $f_r(z) = \frac{r-1}{r-z}$, where $r \in (1,\infty)$ and where the index set $(1,\infty)$ is ordered oppositely to the usual order induced by $\bbR$. Some further examples can be found in \cite[Example~4.6]{Gluck2015}.

Whether a net of analytic functions $(f_j)_{j \in J}$ on $\overline{\bbD}$ is a weighting scheme can also be seen by considering the coefficients of the power series expansion of each function $f_j$ around $0$; this is the content of the following proposition.

\begin{proposition} \label{prop:characterization-of-weighting-schemes}
	Let $(f_j)_{j \in J}$ be a net of analytic functions $\overline{\bbD} \to \bbC$. For each $j \in J$, let $f_j(z) = \sum_{k=0}^\infty a_{j,k} z^k$ be the power series expansion of $f_j$ around $0$. Then $(f_j)_{j \in J}$ is a weighting scheme if and only if the following three conditions are fulfilled:
	\begin{itemize}
		\item[(a)] $\sum_{k=0}^\infty a_{j,k} = 1$ for each $j \in J$.
		\item[(b)] $a_{j,k} \ge 0$ for all $j \in J$ and all $k \in \bbN_0$.
		\item[(c)] For each fixed $k \in \bbN_0$ we have $a_{j,k} \overset{j}{\to} 0$.
	\end{itemize}
	\begin{proof}
		Obviously, (WS1) is equivalent to (a) and (WS2) is equivalent to (b). Now suppose that (WS1) and (WS2) (equivalently: (a) and (b)) are fulfilled: Then it was shown in \cite[Proposition~4.4]{Gluck2015} that (WS3) is equivalent to (c).
	\end{proof}
\end{proposition}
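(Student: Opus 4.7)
The plan is to prove the two easy equivalences by direct calculation from the power series and then invoke an existing result from the literature for the remaining equivalence, which is how the proof in the excerpt is structured.

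First I would note that since each $f_j$ extends analytically to an open neighbourhood of $\overline{\bbD}$, the series $\sum_{k=0}^\infty a_{j,k} z^k$ converges absolutely at $z=1$, so $f_j(1) = \sum_{k=0}^\infty a_{j,k}$. This immediately gives the equivalence of (WS1) and (a). For the equivalence of (WS2) and (b) I would use Taylor's formula $f_j^{(k)}(0) = k!\, a_{j,k}$; since $k!>0$, non-negativity of the derivative at the origin is the same as non-negativity of the coefficient.

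Next I would assume that (a) and (b) both hold and reduce the equivalence of (WS3) and (c) to the statement recorded in \cite[Proposition~4.4]{Gluck2015}. Intuitively, the forward direction $(\text{WS3}) \Rightarrow (\text{c})$ follows because if $|f_j(z)| \to 0$ for each fixed $|z|<1$ then in particular for small positive $z$ the bound $a_{j,k} z^k \le f_j(z)$ (valid by (b)) forces $a_{j,k} \to 0$ for every $k$; the converse direction $(\text{c}) \Rightarrow (\text{WS3})$ uses that for fixed $|z|<1$ the partial sums can be split, with the tail dominated uniformly in $j$ by $\sum_{k \ge N}|z|^k$ (using $a_{j,k} \le 1$ from (a) and (b)) and the head going to $0$ by (c).

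The only nontrivial step is this last equivalence, but since it has already been proved in \cite[Proposition~4.4]{Gluck2015}, the main point of the proof is simply to record that (WS1)/(WS2) translate transparently into (a)/(b) and then to invoke the known result. So the plan is to write a short proof consisting of the two trivial translations followed by a citation.
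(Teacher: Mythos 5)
Your proof takes the same approach as the paper: translate (WS1) and (WS2) into (a) and (b) via the absolutely convergent power series and Taylor's formula, then cite \cite[Proposition~4.4]{Gluck2015} for the equivalence of (WS3) and (c) under those hypotheses. The extra intuition you sketch for the cited step is sound but is precisely what the reference already establishes, so the paper simply invokes it.
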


If $(f_j)_{j \in J}$ is a weighting scheme, $E$ is a complex Banach space and $T \in \calL(E)$ has spectral radius $r(T) = 1$, then the operators $f_j(T)$ ($j \in J$) are well-defined by means of the analytic functional calculus. This simple observation is used in the following definition which is taken from \cite[Definition~4.7]{Gluck2015}.

\begin{definition} \label{def:ws-bounded}
	Let $E$ be a complex Banach space and let $T \in \calL(E)$, $r(T) = 1$. The operator $T$ is called \emph{(WS)-bounded} if there is a weighting scheme $(f_j)_{j \in J}$ such that the set $\{f_j(T)| \, j \in J\} \subset \calL(E)$ is bounded with respect to the operator norm.
\end{definition}

If we consider the examples of weighting schemes which were given right after Definition~\ref{def:weighting-scheme} and take into account that a subnet of a weighting scheme is again a weighting scheme, then we obtain immediately that every Ces\`{a}ro-bounded, more generally every Abel-bounded, and every partially power bounded operator is (WS)-bounded. Several further classes of (WS)-bounded operators are listed in \cite[Example~4.8]{Gluck2015}. Of course it would also be possible to define a similar notion for operators $T$ with spectral radius $r(T) \in (0,\infty)$ by a rescaling (see also the discussion after \cite[Definition~4.7]{Gluck2015}). However, we find it more comprehensible to state all our results for operators with spectral radius $1$; the reader can then easily obtain the general result by a rescaling argument. 

We are now ready to state the following proposition on which the rest of the section is based.

\begin{proposition} \label{prop:invariant-ideal-for-ws-bounded-operators}
	Let $E$ be a complex Banach lattice and let $T\in \calL(E)$ with $r(T) = 1$ be positive and (WS)-bounded. Then there exists a closed ideal $I \subset E$ with the following properties:
	\begin{itemize}
		\item[(a)] $I$ is $T$-invariant.
		\item[(b)] For each eigenvector $z \in E$ of $T$ which belongs to a peripheral eigenvalue we have $z \not\in I$, but $T|z| - |z| \in I$.
		\item[(c)] Whenever $x \in E$ and $T^k|x| \to 0$ as $k \to \infty$, then $x \in I$.
	\end{itemize}
	\begin{proof}
		By assumption, there exists a weighting scheme $(f_j)_{j \in J}$ for which the set $\{f_j(T)|\, j \in J\} \subset \calL(E)$ is bounded in operator norm. We define the wanted ideal $I$ by
		\begin{align*}
			I := \{y \in E| \, f_j(T)|y| \overset{j}{\to} 0 \text{ weakly}\}.
		\end{align*}
		This is easily seen to be indeed an ideal in $E$ and since $\{f_j(T)| \, j \in J\}$ is operator norm bounded, $I$ is also closed. Let us show that $I$ fulfils the claimed properties (a)--(c):
		
		(a) Since $T$ is weakly continuous and $f_j(T)$ commutes with $T$ for every $j \in J$, $I$ is $T$-invariant. 
		
		(b) Let $z \in E$ be an eigenvector of $T$ belonging to a peripheral eigenvalue. Note that we have $T|z| \ge |z|$ and thus $f_j(T)|z| \ge |z|$ for each $j \in J$. Hence, $z \not\in I$. To show that $T|z| - |z| \in I$, we first make the following general observation:
		\begin{center}
		$(*)$ Consider $E$ as a sublattice of its bi-dual $E''$ by means of evaluation and suppose that $0 \le x \le Tx$ for some $x \in E$. Then the nets $(f_j(T)x)_{j \in J}$ and $(T^kx)_{k \in \bbN_0}$ are both weak${}^*$-convergent in $E''$ and their limits coincide. 
		\end{center}
		
		To prove $(*)$ first note that the sequence $(T^kx)_{k \ge 0}$ is positive and increasing, i.e.~$0 \le x \le Tx \le T^2x \le ...$; by \cite[Lemma~4.13]{Gluck2015} it is thus norm bounded and hence, it weak${}^*$-converges in $E''$ to some element $x'' \in E''$. Using this together with Proposition~\ref{prop:characterization-of-weighting-schemes} one immediately checks that $(f_j(T)y)_{j \in J}$ weak${}^*$-converges to $x''$, too. Hence, $(*)$ holds.
		
		We now apply $(*)$ once to the vector $x = |z|$ and once to the vector $x = T|z|$. Clearly, the weak${}^*$-limits of the sequences $(T^k|z|)_{k \in \bbN_0}$ and $(T^kT|z|)_{k \in \bbN_0}$ in $E''$ coincide and by $(*)$ this implies that the nets $(f_j(T)|z|)_{j \in J}$ and $(f_j(T)T|z|)_{j \in J}$ also have the same weak${}^*$-limit in $E''$. Hence, $f_j(T)|T|z| - |z| \,| = f_j(T)T|z| - f_j(T)|z|$ weak${}^*$-converges to $0$ in $E''$ which is equivalent to saying that it weakly converges to $0$ in $E$. Thus, $T|z| - |z| \in I$ as claimed.
		
		(c) Now, let $x \in E$ and suppose that $T^k|x| \to 0$ as $k \to \infty$. Then one easily checks that $f_j(T)|x| \to 0$ even with respect to the norm on $E$. In particular, $x \in I$.
	\end{proof}
\end{proposition}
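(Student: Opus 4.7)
The plan is to construct the ideal directly from the weighting scheme that witnesses (WS)-boundedness. Fix a weighting scheme $(f_j)_{j \in J}$ with $M := \sup_{j \in J}\|f_j(T)\| < \infty$, and define
\[
I := \{y \in E : f_j(T)|y| \to 0 \text{ weakly as } j \text{ runs through } J\}.
\]
My first task is to verify that $I$ is a closed ideal. The ideal property reduces to showing that $|y| \le |x|$ and $x \in I$ force $y \in I$: since each $f_j(T)$ is positive we obtain the squeeze $0 \le f_j(T)|y| \le f_j(T)|x|$, and as the right-hand side tends weakly to $0$ while the left-hand side is norm-bounded by $M\|y\|$, every weak cluster point of $(f_j(T)|y|)$ lies between $0$ and $0$. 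Closedness then follows from $\sup_j\|f_j(T)\| \le M$ via a standard $\varepsilon/2$-argument.

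Next I would dispatch parts (a) and (c). Part (a) is almost automatic: $|Tx| \le T|x|$ and $T$ commutes with every $f_j(T)$, so $0 \le f_j(T)|Tx| \le T f_j(T)|x|$, and weak-weak continuity of $T$ gives $f_j(T)|Tx| \to 0$ weakly for $x \in I$. For part (c) I expect even norm convergence. Expanding $f_j(T)|x| = \sum_{k=0}^\infty a_{j,k}\, T^k|x|$ and using that $\|T^k|x|\| \to 0$, given $\varepsilon > 0$ I would choose $K$ with $\|T^k|x|\| < \varepsilon$ for $k \ge K$, bound the head by $\bigl(\max_{k < K}\|T^k|x|\|\bigr)\sum_{k<K}a_{j,k}$ (which tends to $0$ in $j$ by condition (c) of Proposition~\ref{prop:characterization-of-weighting-schemes}) and bound the tail by $\varepsilon \sum_{k \ge K} a_{j,k} \le \varepsilon$ via (WS1).

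The real content lies in (b). The non-membership $z \notin I$ is straightforward: from $T|z| \ge |Tz| = |z|$ one obtains $T^k|z| \ge |z|$ for every $k \in \bbN_0$, and using $\sum_k a_{j,k} = 1$ we infer $f_j(T)|z| \ge |z|$; any positive functional separating $|z|$ from $0$ then obstructs weak convergence to $0$. The delicate half is showing $T|z| - |z| \in I$, and here my plan is to pass to the bidual $E''$. The positive increasing sequence $(T^k|z|)$ is norm-bounded (this is precisely \cite[Lemma~4.13]{Gluck2015} for (WS)-bounded operators) and therefore weak${}^*$-converges in $E''$ to some element $x''$. Using nonnegativity of the coefficients $a_{j,k}$, the normalisation $\sum_k a_{j,k} = 1$, and the vanishing $a_{j,k} \to 0$ for each fixed $k$ (Proposition~\ref{prop:characterization-of-weighting-schemes}), I would show that the net $(f_j(T)|z|)_{j \in J}$ in $E''$ also weak${}^*$-converges to the same limit $x''$. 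Applying the identical argument to $T|z|$ in place of $|z|$ — note that $(T^k(T|z|))_{k \ge 0}$ is a cofinal subsequence of $(T^k|z|)_{k \ge 0}$ and hence shares the same weak${}^*$-limit — yields that $(f_j(T)T|z|)$ also weak${}^*$-converges to $x''$. Subtracting gives $f_j(T)(T|z| - |z|) \to 0$ weak${}^*$ in $E''$, which is exactly weak convergence to $0$ in $E$; since $T|z| - |z| \ge 0$ equals its own modulus, this is the desired $T|z| - |z| \in I$. The main obstacle is the transfer of monotone convergence of $(T^k|z|)$ to the weak${}^*$-convergence of $(f_j(T)|z|)$; this is where the three defining properties of a weighting scheme are simultaneously used, and where the technical heart of the proof will sit.
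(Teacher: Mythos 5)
Your proposal is correct and follows essentially the same route as the paper: the same choice of ideal $I$, the same squeeze arguments for parts (a) and (c), the same use of the bidual and of \cite[Lemma~4.13]{Gluck2015} for the increasing sequence $(T^k|z|)_k$, and the same trick of applying the weak${}^*$-limit comparison to both $|z|$ and $T|z|$ and subtracting. The one step you flag as the ``technical heart'' — transferring weak${}^*$-convergence from $(T^kx)_k$ to $(f_j(T)x)_j$ via the three weighting-scheme axioms — is also the step the paper leaves as ``one immediately checks.''
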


In \cite[Theorem~7.1]{Gluck2015} it was shown by the author that every (WS)-bounded positive operator on a Banach lattice has cyclic peripheral spectrum; it was also pointed out there that the author did not know whether this theorem can be proved by a (modification of a) classical method of Lotz from \cite[the proof of Theorem~4.7]{Lotz1968}. Using Proposition~\ref{prop:invariant-ideal-for-ws-bounded-operators} we can now indeed give a new proof of \cite[Theorem~7.1]{Gluck2015} by a technique similar to Lotz':

\begin{corollary} \label{cor:ws-bounded-positive-operators-has-cyclic-peripheral-spectrum}
	Let $E$ be a complex Banach lattice, let $T \in \calL(E)$ with $r(T) = 1$ be positive and (WS)-bounded. Then the peripheral spectrum of $T$ is cyclic.
	\begin{proof}
		Embedding $E$ into an ultra power we may assume that $\sigma_{\per}(T)$ consists of eigenvalues. Let $\lambda$ be a peripheral eigenvalue of $T$ and $z \in E$ a corresponding eigenvector. Let $I$ be the $T$-invariant ideal given in Proposition~\ref{prop:invariant-ideal-for-ws-bounded-operators}. If $T_/$ denotes the operator induced by $T$ on the quotient space $E/I$, then we obtain, using property (b) in Proposition~\ref{prop:invariant-ideal-for-ws-bounded-operators}, that $z+ I$ and $|z|+I$ are non-zero elements of $E/I$ and eigenvectors of $T_/$ for the eigenvalues $\lambda$ and $1$, respectively. Proposition~\ref{prop:dominated-eigenvector}(b) thus shows that $\lambda^k \in \sigma(T_/)$ for every $k \in \bbZ$. Hence, $\lambda^k \in \sigma(T)$ for every $k \in \bbZ$ by Proposition~\ref{prop:spectrum-of-induced-operators}(c).
	\end{proof}
\end{corollary}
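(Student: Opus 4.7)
The strategy is essentially handed to us by the machinery already assembled: I would exploit the closed $T$-invariant ideal $I$ provided by Proposition~\ref{prop:invariant-ideal-for-ws-bounded-operators}, pass to the quotient $E/I$, and there produce the pair of eigenvectors (one for the peripheral eigenvalue $\lambda$, one for $1$) needed to feed Proposition~\ref{prop:dominated-eigenvector}(b).

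First I would carry out the standard reduction: by embedding $E$ into a suitable ultra power and lifting $T$ to that Banach lattice (which preserves positivity, the spectral radius, the spectrum, and (WS)-boundedness), I may assume that every peripheral spectral value of $T$ is an actual eigenvalue. Fix then a peripheral eigenvalue $\lambda$ with an eigenvector $z \in E$, so that $Tz = \lambda z$ and $|\lambda|=1$; in particular $T|z| = |\lambda z| \leq T|z|$ combined with $|Tz|=|z|$ yields $T|z|\geq |z|$, which is what Proposition~\ref{prop:invariant-ideal-for-ws-bounded-operators}(b) exploits.

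Next I would apply Proposition~\ref{prop:invariant-ideal-for-ws-bounded-operators} to obtain the closed $T$-invariant ideal $I\subset E$, and consider the induced operator $T_/$ on the quotient Banach lattice $E/I$. Property (b) of that proposition gives at once two facts about the classes modulo $I$: on the one hand $z+I$ is non-zero, so it is an eigenvector of $T_/$ for the eigenvalue $\lambda$; on the other hand $T|z|-|z|\in I$, which means $T_/(|z|+I) = |z|+I$, so $|z|+I$ is a fixed point of $T_/$. Because the quotient map $E\to E/I$ is a lattice homomorphism, $|z+I| = |z|+I$, so the fixed point $|z|+I$ dominates the absolute value of the eigenvector $z+I$ (trivially, with equality).

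At this point Proposition~\ref{prop:dominated-eigenvector}(b), applied to $T_/$ on $E/I$ with eigenvector $z+I$ and dominating fixed point $|z|+I$, yields $\lambda^k\in\sigma(T_/)$ for every $k\in\bbZ$. Since $I$ is closed and $T$-invariant, Proposition~\ref{prop:spectrum-of-induced-operators}(c) then lifts this conclusion back to $\sigma(T)$, giving $\lambda^k\in\sigma(T)$ for all $k\in\bbZ$ and completing the cyclicity statement. There is no genuine obstacle here: the only point that requires any care is confirming that $z+I\neq 0$, but this is exactly the content of property (b) of the invariant ideal, which relies on the inequality $f_j(T)|z|\geq|z|$ for all $j$ to prevent $|z|$ from being sent to zero by the weighting scheme.
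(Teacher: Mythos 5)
Your proposal reproduces the paper's argument step for step: reduce via an ultra power so the peripheral spectrum consists of eigenvalues, take the closed $T$-invariant ideal $I$ from Proposition~\ref{prop:invariant-ideal-for-ws-bounded-operators}, pass to $E/I$ where $z+I$ and $|z|+I$ are eigenvectors of $T_/$ for $\lambda$ and $1$, apply Proposition~\ref{prop:dominated-eigenvector}(b), and lift back with Proposition~\ref{prop:spectrum-of-induced-operators}(c). This is correct and matches the paper's proof in every essential respect (aside from a garbled intermediate line that should read $T|z| \geq |Tz| = |\lambda||z| = |z|$, though the conclusion drawn from it is right).
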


For irreducible positive operators much more on the peripheral (point) spectrum can be said than for arbitrary positive operators. For example, let $E$ be a complex Banach lattice, let $T \in \calL(E)$ with $r(T) = 1$ be positive, irreducible and Abel bounded and assume that the peripheral point spectrum of $T$ is non-empty. If $z \in E$ is an eigenvector for some peripheral eigenvalue of $T$, then $|z| \le T|z|$ and it thus follows from \cite[Lemma~V.4.8]{Schaefer1974} that the adjoint $T'$ possesses a fixed vector $x' > 0$. Since $T$ is irreducible and since the set $\{x \in E|\, \langle x', |x| \rangle = 0\}$ is a $T$-invariant closed  and proper ideal in $E$ and thus equals $\{0\}$, we obtain that $x'$ is even \emph{strictly positive}, meaning that we have $\langle x', x \rangle > 0$ for each $0 < x  \in E$. One can then employ \cite[Theorem~V.5.2]{Schaefer1974} or \cite[Theorem~4.12]{Grobler1995} to obtain very precise information on the spectral properties of $T$. Now, assume that $T$ is no longer Abel bounded, but only (WS)-bounded. Then it is no longer clear (at least not to the author) whether $T'$ possesses a positive non-zero fixed vector. However, we can still prove the same conclusions as in \cite[Theorem~V.5.2]{Schaefer1974} or \cite[Theorem~4.12]{Grobler1995}. 

To state those results in the next theorem, recall that two bounded linear operators $S$ and $T$ on a Banach space $E$ are called \emph{similar} if there is a bijection $V \in \calL(E)$ such that $T = V^{-1}SV$. 

\begin{theorem} \label{thm:irreducible}
	Let $E$ be a complex Banach lattice and let $T \in \calL(E)$ with $r(T) = 1$ be positive and irreducible. Assume that the peripheral point spectrum of $T$ is non-empty and that $T$ is (WS)-bounded. Then the following assertions hold:
	\begin{itemize}
		\item[(a)] For every $0 < x \in E$ we have $T^kx \not\to 0$ as $k \to \infty$.
		\item[(b)] Whenever $z \in \ker(\lambda-T)$ for a peripheral eigenvalue $\lambda$ of $T$, then $|z| \in \ker(1-T)$.
		\item[(c)] $1$ is an eigenvalue of $T$ and the corresponding eigenspace $\ker(1-T)$ is a one-dimensional sublattice of $E$ which is spanned by a quasi-interior point of $E_+$.
		\item[(d)] For each peripheral eigenvalue $\lambda$ of $T$ the operators $T$ and $\lambda T$ are similar. In particular we have $\lambda\sigma(T) = \sigma(T)$ and $\lambda\sigma_{\pnt}(T) = \sigma_{\pnt}(T)$.
		\item[(e)] The peripheral point spectrum of $T$ is a subgroup of the complex unit circle.
		\item[(f)] Each peripheral eigenvalue $\lambda$ of $T$ is an algebraically simple eigenvalue of $T$, i.e.~we have $\dim \bigcup_{n \in \bbN} \ker((\lambda-T)^n) = \dim \ker(\lambda-T) = 1$.
		\item[(g)] The only eigenvalue of $T$ which admits a positive eigenvector is $1$.
	\end{itemize}
\end{theorem}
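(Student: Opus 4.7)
The plan is to derive all seven assertions from a single tool: the closed, $T$-invariant ideal $I$ supplied by Proposition~\ref{prop:invariant-ideal-for-ws-bounded-operators}. Since the peripheral point spectrum is non-empty, part (b) of that proposition exhibits an eigenvector $z \notin I$, so irreducibility forces $I = \{0\}$. Assertion (a) is then immediate from property (c) of the proposition. The key further consequence, which drives the remaining arguments, is the \emph{fixed-point principle}: whenever $0 \le x \in E$ satisfies $Tx \ge x$, the same weak${}^*$-convergence argument used to prove $T|z|-|z| \in I$ in Proposition~\ref{prop:invariant-ideal-for-ws-bounded-operators}(b) gives $Tx - x \in I = \{0\}$, hence $Tx = x$.

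Applying this principle to $|z|$ yields (b). For (c), the sublattice property of $\ker(1-T)$ follows by writing a real fixed vector $u$ as $u^+ - u^-$ and noting $Tu^\pm \ge u^\pm$ (since $Tu^+ \ge Tu = u$ and $Tu^+ \ge 0$, and similarly for $u^-$), so the fixed-point principle applies. Any positive nonzero fixed vector $u$ is quasi-interior because $\overline{E_u}$ is a $T$-invariant closed ideal which cannot be proper by irreducibility. One-dimensionality: if $u, v > 0$ were linearly independent fixed vectors, then for each $t > 0$ both $(u-tv)^\pm$ would be fixed vectors, hence either zero or quasi-interior; since they are disjoint and a positive vector disjoint from a quasi-interior point must vanish (disjointness propagates from $E_u$ to its dense closure $E$), at most one is nonzero. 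This produces the dichotomy $u \le tv$ or $tv \le u$ for every $t > 0$; taking $\alpha := \sup\{t > 0 : tv \le u\} \in (0,\infty)$ and passing to the limit $t \to \alpha$ from both sides yields $u = \alpha v$, contradicting linear independence.

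For (d) and (e), I invoke Kakutani. With $w := |z|$, the principal ideal $E_w$ endowed with the order-unit (Minkowski) norm is lattice-isometric to $C(K;\bbC)$ for some compact $K$, with $w \leftrightarrow \mathbbm{1}_K$. The restriction $T|_{E_w}$ is a Minkowski contraction (since $|y| \le cw$ forces $|Ty| \le T(cw) = cw$) fixing $w$, so it corresponds to a Markov operator on $C(K)$. Our eigenvector $z$ corresponds to some $h \in C(K)$ with $|h| = \mathbbm{1}_K$ and $Th = \lambda h$; the equality case in $|(Th)(x)| = \mathbbm{1}_K(x) = (T|h|)(x)$ forces, for every $x$, the representing probability measure $\mu_x$ of $f \mapsto (Tf)(x)$ to be concentrated on the set $\{y : h(y) = \lambda h(x)\}$. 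A direct calculation then gives $M_h \, T \, M_{\bar h} = \bar\lambda T$ on $C(K) = E_w$. Since $|M_h y| = |y|$ in $E_w$, the operator $M_h$ preserves the original $E$-norm there, and density (quasi-interiority of $w$) allows it to extend to a bijective isometry $V \in \calL(E)$; the conjugation identity $V T V^{-1} = \bar\lambda T$ passes to $E$ by continuity, proving (d). Assertion (e) then follows, because $\lambda \sigma_{\pnt}(T) = \sigma_{\pnt}(T)$ preserves the peripheral part, and $1 \in \sigma_{\pnt}(T)$ supplies inverses.

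Finally, (f) reduces by (d) to the case $\lambda = 1$, since the intertwining $TV = \lambda VT$ sends $\ker((T-\mu)^n)$ bijectively to $\ker((T-\mu/\lambda)^n)$. For $\lambda = 1$: suppose $u \in \ker((1-T)^2) \setminus \ker(1-T)$. Then $(1-T)u = -cw$ for some $c \ne 0$; rescaling $u$ and passing to its real part (the imaginary part lies in $\ker(1-T)$) yields real $u$ with $Tu = u + w$. From $Tu^+ \ge (u+w)^+ \ge u^+$ and the fixed-point principle, $Tu^+ = u^+$, whence $Tu^- = u^- - w$. Iterating gives $T^n u^- = u^- - nw \ge 0$, so $u^- \ge nw$ for every $n$, which is impossible. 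A routine induction lifts this to all higher orders. Assertion (g) follows since $Tv = \mu v$ with $v > 0$ forces $\mu \in [0,1]$ real, and any $\mu \in [0,1)$ gives $T^n v = \mu^n v \to 0$, contradicting (a). The principal obstacle, in my view, is the Kakutani step in (d)---specifically verifying that $M_h$ extends from $E_w$ to an $E$-isometry and that the conjugation identity genuinely survives the extension.
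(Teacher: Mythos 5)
Your proof is correct and follows the same skeleton as the paper's: construct the $T$-invariant closed ideal $I$ from Proposition~\ref{prop:invariant-ideal-for-ws-bounded-operators}, show $I=\{0\}$ by irreducibility, and then read off the assertions. Where you diverge is in packaging and in several sub-arguments, and those differences are worth recording. Your explicit \emph{fixed-point principle} (if $0\le x\le Tx$ then $Tx-x\in I=\{0\}$, so $Tx=x$, by the same argument used for $(*)$ in Proposition~\ref{prop:invariant-ideal-for-ws-bounded-operators}(b)) is a clean formulation that the paper leaves implicit; it cleanly unifies (b), the sublattice part of (c), and your proof of (f). For one-dimensionality of $\ker(1-T)$ the paper cites \cite[Corollary~2 of Theorem~II.6.3]{Schaefer1974} together with Remark~\ref{rem:one-dimensional-lattice}, whereas you give a self-contained disjointness/dichotomy argument via $\alpha := \sup\{t>0: tv\le u\}$ --- a more elementary route that buys independence from those references at the cost of a slightly longer computation. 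For (d) you re-prove the Markov/torsion Lemma~\ref{lem:similarity-lemma} via representing measures instead of citing \cite[Theorem~B-III-2.4(a)]{Arendt1986}; the extension step you flag as a ``principal obstacle'' is in fact unproblematic and is handled identically in the paper: since $|M_h y|=|y|$ in the lattice sense and the $E$-norm is a lattice norm, $M_h$ is $E$-isometric on the dense ideal $E_w$ and extends, and the intertwining identity persists by continuity. For (f) the paper reduces to $\lambda=1$ and then invokes the proof technique of \cite[Proposition~4.10]{Gluck2015}; your argument ($Tu^+=u^+$ by the fixed-point principle, then $T^n u^- = u^- - nw \ge 0$, an impossibility) is a pleasant direct alternative. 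One small imprecision: the intertwining $TV=\lambda VT$ carries $\ker((T-\mu)^n)$ onto $\ker((T-\lambda\mu)^n)$, not $\ker((T-\mu/\lambda)^n)$; this does not affect the reduction of (f) to $\lambda=1$, where you take $\mu=1$.
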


The proof of Theorem~\ref{thm:irreducible} is in some respects very similar to the proofs of \cite[Theorem~V.5.2]{Schaefer1974} or \cite[Theorem~4.12]{Grobler1995}. However, in some points our proof differs since we do no longer know whether we have a strictly positive fixed point of $T'$ at our disposal. 

To give the proof of Theorem~\ref{thm:irreducible} we first need a number of ingredients. Recall that an operator $U \in \calL(E)$ on a complex Banach lattice $E$ is called a \emph{torsion operator} if we have $|Ux| = |x|$ for each $x \in E$. We observe the following fact:

\begin{remark}
	Let $E$ be a complex Banach lattice and let $U \in \calL(E)$ be a torsion operator. Then $U$ is bijective and its inverse $U^{-1}$ is a torsion operator, too.
	\begin{proof}
		Observe that if $U$ is bijective, then $U^{-1}$ is obviously a torsion operator, too. Moreover, $U$ is clearly isometric and thus injective; it therefore remains to prove that $U$ is surjective. We do this in two steps:
		
		\emph{Step 1. We first assume that $E$ is a $C(K;\bbC)$-space for some compact Hausdorff space $K \not= \emptyset$.} Then $U$ is contained in the so-called \emph{center} of $\calL(E)$ and thus it is a multiplication operator with symbol $u \in C(K;\bbC)$ (see e.g.~\cite[Section~C-I-9]{Arendt1986}). We clearly have $|u| = |u\mathbbm{1}_K| = |U\mathbbm{1}_K| = \mathbbm{1}_K$ and thus $U$ is obviously bijective.
		
		\emph{Step 2: Now, let $E$ be arbitrary.} If $x > 0$, then the principal ideal $E_x$ generated by $x$ is invariant with respect to $U$; moreover, it is an AM-space with unit $x$ when endowed with an appropriate norm (see \cite[the corollary of Proposition~II.7.2]{Schaefer1974}) and may thus be identified with some $C(K;\bbC)$-space. The restriction of $U$ to $E_x$ is clearly a torsion operator on the AM-space $E_x$ and thus surjective onto $E_x$ by Step 1. Since the restriction of $U$ to any non-zero principal ideal is surjective onto this principal ideal, $U$ must be surjective itself.
	\end{proof}
\end{remark}

Another ingredient for the proof of Theorem~\ref{thm:irreducible} is the following well-known auxiliary result.

\begin{lemma} \label{lem:similarity-lemma}
	Let $E = C(K;\bbC)$ for some compact Hausdorff space $K \not= \emptyset$ and let $T \in \calL(E)$ be a Markov operator on $E$. If $\lambda$ is a peripheral eigenvalue of $E$ and if $f \in C(K;\bbC)$ is a corresponding eigenfunction and satisfies $|f| = \mathbbm{1}_K$, then there is a torsion operator $U \in \calL(E)$ such that $\lambda T = U^{-1}TU$.
	\begin{proof}
		See \cite[Theorem~B-III-2.4(a)]{Arendt1986}.
	\end{proof}
\end{lemma}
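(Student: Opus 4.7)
The plan is to take $U$ to be the multiplication operator $g \mapsto f \cdot g$ on $C(K;\bbC)$. Since $|f| = \mathbbm{1}_K$ pointwise, we immediately get $|Ug| = |f||g| = |g|$ for every $g$, so $U$ is a well-defined torsion operator. The similarity relation $\lambda T = U^{-1}TU$ is equivalent to $TU = \lambda UT$, i.e.\ to the pointwise identity
\begin{align*}
	T(f \cdot g) = \lambda f \cdot Tg \qquad \text{for all } g \in C(K;\bbC),
\end{align*}
so the whole problem reduces to establishing this multiplicative property of $T$ at the eigenfunction $f$.

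To prove it, I would pass to the Riesz representation of the Markov operator: for each $x \in K$ the functional $g \mapsto (Tg)(x)$ is positive on $C(K;\bbC)$ with value $1$ at $\mathbbm{1}_K$, so there is a unique Borel probability measure $\mu_x$ on $K$ with $(Tg)(x) = \int_K g \, d\mu_x$. Applied to $f$ and combined with $Tf = \lambda f$ this gives $\int_K f \, d\mu_x = \lambda f(x)$. Taking moduli yields
\begin{align*}
	1 = |\lambda f(x)| = \Bigl|\int_K f \, d\mu_x\Bigr| \le \int_K |f| \, d\mu_x = 1,
\end{align*}
so equality holds in the triangle inequality for the integral of $f$ with respect to $\mu_x$.

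The key extraction step is then the standard consequence that, because $\mu_x$ is a probability measure and $|f| \equiv 1$, the integrand $f$ must be $\mu_x$-a.e.\ equal to a single unimodular constant, and matching that constant against $\int f\, d\mu_x = \lambda f(x)$ forces $f(y) = \lambda f(x)$ for $\mu_x$-almost every $y \in K$. Plugging this into the integral representation of $T(f\cdot g)$ gives
\begin{align*}
	T(f \cdot g)(x) = \int_K f(y)g(y)\, d\mu_x(y) = \lambda f(x) \int_K g(y)\, d\mu_x(y) = \lambda f(x)\, (Tg)(x),
\end{align*}
which is exactly $TU = \lambda UT$, and the lemma follows.

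The main obstacle I anticipate is the rigidity step — justifying that equality in the triangle inequality for a complex integral against a probability measure forces $f$ to be essentially constant on the support of $\mu_x$. This is elementary but deserves a short, careful argument (for instance by considering $\re(\overline{\lambda f(x)} f(y))$ and using that it is bounded by $1$ with $\mu_x$-integral equal to $1$). Everything after that — the definition of $U$, the torsion property, and the algebraic rearrangement between $TU = \lambda UT$ and $\lambda T = U^{-1}TU$ — is purely formal.
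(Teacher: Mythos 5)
Your proof is correct, and it is a useful complement to the paper, which disposes of this lemma by citing \cite[Theorem~B-III-2.4(a)]{Arendt1986} without argument. The structure is sound: defining $U$ as multiplication by $f$ trivially gives a torsion operator (with $U^{-1}$ multiplication by $\overline{f}$, since $|f|=\mathbbm{1}_K$ makes $f$ invertible in $C(K;\bbC)$), and the desired similarity reduces correctly to the functional equation $T(fg) = \lambda f\, Tg$. Your Riesz-kernel argument for that equation is the standard route in the $C(K)$ setting: $(Tg)(x) = \int g\,d\mu_x$ with $\mu_x$ a probability measure, $Tf=\lambda f$ and $|\lambda|=r(T)=1$ (Markov) force $|\int f\,d\mu_x| = 1 = \int|f|\,d\mu_x$, and the rigidity of equality in the triangle inequality pins $f$ down to the constant $\lambda f(x)$ on $\operatorname{supp}\mu_x$. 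Your own note on how to make the rigidity step precise (consider $\re\bigl(\overline{\lambda f(x)}\,f(y)\bigr)\le 1$, which then integrates to $1$ against the probability measure $\mu_x$, hence equals $1$ $\mu_x$-a.e., hence $f(y)=\lambda f(x)$ $\mu_x$-a.e.) is exactly right and should be spelled out if this were to replace the citation. The only difference in flavour from the lattice-theoretic treatment in the cited reference is that you exploit the concrete $C(K)$ structure via Riesz representation rather than signum/modulus operators in an abstract Banach lattice; in the $C(K)$ setting the two amount to the same thing, and your version has the advantage of being elementary and self-contained.
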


We also need the following well-known fact about Banach lattices:

\begin{remark} \label{rem:one-dimensional-lattice}
	Let $E \not= \{0\}$ be a real Banach lattice and suppose that every element $0 < x \in E$ is a quasi-interior point of $E_+$. Then $E$ is one-dimensional.
\end{remark}

A proof of this well-known result can for example be found in \cite[Lemma~5.1]{Lotz1968}; since this reference is however written in German, we include here an English translation of the proof for the convenience of the reader.

\begin{proof}[Proof of Remark~\ref{rem:one-dimensional-lattice}]
	By \cite[Propositions~II.3.4 and~II.5.2(ii)]{Schaefer1974} we only have to prove that $E$ is totally ordered. To do so, let $x,y \in E$. Then $x - x \land y$ and $y - x \land y$ are positive vectors and their infimum equals $0$. It thus follows from the assumption that one	of them is $0$, which proves that $x \le y$ or $y \le x$.
\end{proof}

We can now give the proof of Theorem~\ref{thm:irreducible}.

\begin{proof}[Proof of Theorem~\ref{thm:irreducible}]
	(a) and (b) Let $I \subset E$ be the ideal given by Proposition~\ref{prop:invariant-ideal-for-ws-bounded-operators}. Since the peripheral point spectrum of $T$ is non-empty by assumption we conclude from assertion (b) of Proposition~\ref{prop:invariant-ideal-for-ws-bounded-operators} that $I \not= E$. As $T$ is irreducible and $I$ is $T$-invariant it follows that $I = \{0\}$. 
	
	Now assume that $x \in E_+$ and $T^kx \to 0$. Then $x \in I$ by Proposition~\ref{prop:invariant-ideal-for-ws-bounded-operators}(c) and thus $x = 0$. This proves (a). To prove (b), let $z \in E$ be an eigenvector for a peripheral eigenvalue of $T$. Then Proposition~\ref{prop:invariant-ideal-for-ws-bounded-operators}(b) yields $T|z| - |z| \in I = \{0\}$, so $|z| \in \ker(1-T)$.

	(c) As the peripheral point spectrum of $T$ is non-empty, it follows from (b) that $1$ is an eigenvalue of $T$. Moreover, since $T$ maps real elements of $E$ to real elements again, $\ker(1-T)$ is conjugation invariant, so (b) implies that $\ker(1-T)$ is a sublattice of $E$. Let us prove that $\ker(1-T)$ is one-dimensional: since $T$ is irreducible, every non-zero positive element of $\ker(1-T)$ is a quasi-interior point of $E_+$ and thus also a quasi-interior point of $E_+ \cap \ker(1-T)$ (see \cite[Corollary 2 of Theorem~II.6.3]{Schaefer1974}). By Remark~\ref{rem:one-dimensional-lattice} the real part of $\ker(1-T)$ is therefore a one-dimensional real vector space and hence $\ker(1-T)$ is a one-dimensional complex vector space. Since $\ker(1-T)$ contains a non-zero positive vector, and this vector is a quasi-interior point of $E_+$ and spans $\ker(1-T)$, assertion (c) is proved.
	
	(d) Let $\lambda$ be a peripheral eigenvalue and let $z \in E$ be a corresponding eigenvector. According to (b) and (c), $|z|$ is contained in $\ker(1-T)$ and is a quasi-interior point of $E_+$. When endowed with an appropriate norm, the principal ideal $E_{|z|}$ becomes an AM-space with unit $|z|$ (see \cite[the corollary of Proposition~II.7.2]{Schaefer1974}) and clearly $E_{|z|}$ is invariant with respect to $T$. Since $T|_{E_{|z|}}$ is a Markov operator on the AM-space $E_{|z|}$, Lemma~\ref{lem:similarity-lemma} implies that there is a torsion operator $U$ on $E_{|z|}$ such that $\lambda T|_{E_{|z|}} = U^{-1}T|_{E_{|z|}}U$. Obviously, $U$ is isometric with respect to the $E$-norm and since $E_{|z|}$ is dense in $E$, $U$ has a continuous linear extension $\tilde U\in \calL(E)$ which is again a torsion operator and which fulfils the equation $\lambda T = \tilde U^{-1} T \tilde U$. Hence, $T$ and $\lambda T$ are similar. The remaining assertions of (d) are now obvious.
	
	(e) Assertion (e) follows immediately from the assertion $\lambda \sigma_{\pnt}(T) = \sigma_{\pnt}(T)$ in (d).
	
	(f) By (d) it suffices to prove the assertion for $\lambda=1$. We already know from (c) that $1$ is a geometrically simple eigenvalue.	Next we note that $\ker((1-T)^2) = \ker(1-T)$; indeed, since $T$ is (WS)-bounded we can use exactly the same proof as in \cite[Proposition~4.10]{Gluck2015} to see that there exists no vector in $\ker((1- T)^2) \setminus \ker(1- T)$ (note that in \cite[Proposition~4.10]{Gluck2015} $r(T) = 1$ was assumed to be a pole of the resolvent; however, one does not need this assumption to show that $\ker((1- T)^2) \setminus \ker(1- T) = \emptyset$).
	
	Finally, let $n \ge 2$ and assume that $y \in \ker((1-T)^n)$. Then $(1-T)^{n-2}y \in \ker((1-T)^2) = \ker(1-T)$ and thus, $y \in \ker((1-T)^{n-1})$. We have therefore proved that $\ker((1-T)^n) = \ker((1-T)^{n-1})$ for each $n \ge 2$ and iterating this equality we obtain that $\ker((1-T)^n) = \ker(1-T)$ for each $n \ge 2$. This proves (f).
	
	(g) Let $\lambda$ be an eigenvalue of $T$ with corresponding eigenvector $z > 0$. Then $\lambda$ must be a peripheral eigenvalue since if we assumed $|\lambda| < 1$, then $T^kz \to 0$ as $k \to\infty$ which contradicts (a). However, since $\lambda$ is a peripheral eigenvalue we can employ (b) to obtain $\lambda z = Tz = T|z| = |z| = z$. Hence, $\lambda = 1$.
\end{proof}

\appendix

\section{Invariant subspaces and the spectrum of induced operators}

In this appendix we recall a few spectral results related to invariant subspaces. To state the results, we need the following notation: Let $E$ be a complex Banach space and let $F \subset E$ be a closed vector subspace. Whenever $T \in \calL(E)$ leaves $F$ invariant, then we denote by $T_| \in \calL(F)$ the restriction of $T$ to $F$ and by $T_/ \in \calL(E/F)$ the operator induced by $T$ on the quotient space $E/F$.

\begin{proposition} \label{prop:spectrum-of-induced-operators}
	Let $E$ be a complex Banach space, let $F \subset E$ be a closed vector subspace and suppose that $T \in \calL(E)$ leaves $F$ invariant. 
	\begin{itemize}
		\item[(a)] We have $\|T_|\| \le \|T\|$ and $\|T_/\| \le \|T\|$ as well as $r(T_|) \le r(T)$ and $r(T_/) \le r(T)$. 
		\item[(b)] For all $\lambda \in \bbC$ with $|\lambda| > r(T)$ the resolvent operator $R(\lambda,T)$ also leaves $F$ invariant and we have $R(\lambda,T)_| = R(\lambda,T_|)$ and $R(\lambda,T)_/ = R(\lambda, T_/)$.
		\item[(c)] If $\lambda \in \bbC$ has modulus $r(T)$ and is a spectral value of $T_|$ or of $T_/$, then it is also a spectral value of $T$.
	\end{itemize}
	\begin{proof}
		(a) and (b): One easily checks that $\|T_|\| \le \|T\|$ and $\|T_/\| \le \|T\|$. Now, let $\lambda \in \bbC$ with $|\lambda| > r(T)$. It follows from the Neumann series representation of the resolvent that $R(\lambda,T)$ leaves $F$ invariant, too. We have 
		\begin{align*}
			R(\lambda,T)_| \, (\lambda-T_|) = (\lambda - T_|) \, R(\lambda,T)_| = \operatorname{id}_{F},
		\end{align*}
		and the same holds for the operators on $E/F$. Hence, $\lambda$ is in the resolvent set of $T_|$ and $T_/$ and we have $R(\lambda,T)_| = R(\lambda,T_|)$ as well as $R(\lambda,T)_/ = R(\lambda,T_/)$. This also shows that $r(T_|) \le r(T)$ and $r(T_/) \le r(T)$.
		
		(c) Let $\lambda \in \sigma(T_|)$. Choose a sequence $(\lambda_n) \subset \bbC$ which converges to $\lambda$ and fulfils $|\lambda_n| > r(T)$ for each $n$. We then have
		\begin{align*}
			\|R(\lambda_n,T)\| \ge \|R(\lambda_n,T)_|\| = \|R(\lambda_n,T_|)\| \to \infty \quad \text{as } n \to \infty.
		\end{align*}
		Hence, $\lambda \in \sigma(T)$. If $\lambda \in \sigma(T_/)$, the same estimate works on $E/F$ instead of $F$.
	\end{proof}
\end{proposition}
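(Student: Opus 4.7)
The plan is to handle the three assertions in order, with (a) and (b) essentially formal and (c) relying on the standard fact that the resolvent norm blows up at a topological boundary point of the spectrum. For part (a), the inequality $\|T_|\| \le \|T\|$ is immediate from the definition, since the supremum defining $\|T_|\|$ is taken over a subset of the unit ball of $E$. For $\|T_/\| \le \|T\|$ I would use the quotient norm directly: for every $x \in E$ and every $y \in F$ one has $\|T_/(x+F)\| = \|Tx + F\| \le \|Tx + Ty\| \le \|T\|\,\|x+y\|$, and taking the infimum over $y \in F$ gives $\|T_/(x+F)\| \le \|T\|\,\|x+F\|$. The spectral radius estimates then follow from Gelfand's formula together with the identities $(T_|)^n = (T^n)_|$ and $(T_/)^n = (T^n)_/$, which yield $\|(T_|)^n\|^{1/n} \le \|T^n\|^{1/n}$ and similarly for $T_/$; passing to the limit produces $r(T_|) \le r(T)$ and $r(T_/) \le r(T)$.

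For part (b), fix $\lambda \in \bbC$ with $|\lambda| > r(T)$ and expand the resolvent as the Neumann series $R(\lambda,T) = \sum_{n=0}^\infty \lambda^{-(n+1)} T^n$. Since $F$ is $T$-invariant, every partial sum maps $F$ into itself, and because $F$ is closed the norm limit does so as well, proving $R(\lambda,T)F \subset F$. Restricting the operator identities $(\lambda-T)R(\lambda,T) = R(\lambda,T)(\lambda-T) = \operatorname{id}_E$ to $F$ then yields $(\lambda-T_|) R(\lambda,T)_| = R(\lambda,T)_| (\lambda-T_|) = \operatorname{id}_F$, so $\lambda$ lies in the resolvent set of $T_|$ and $R(\lambda,T_|) = R(\lambda,T)_|$. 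The same argument, passed to the quotient, handles $T_/$.

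For part (c), suppose $\lambda \in \sigma(T_|)$ with $|\lambda| = r(T)$; the quotient case is completely analogous. Part (a) gives $r(T_|) \le r(T) = |\lambda|$, so $\lambda$ is a boundary point of $\sigma(T_|)$. Pick any sequence $\lambda_n \to \lambda$ with $|\lambda_n| > r(T)$, for instance $\lambda_n = (1 + 1/n)\lambda$. By (a) each $\lambda_n$ lies in the resolvent sets of both $T$ and $T_|$, and by (b) the restriction $R(\lambda_n,T)_|$ coincides with $R(\lambda_n,T_|)$, which produces the inequality $\|R(\lambda_n,T)\| \ge \|R(\lambda_n,T_|)\|$. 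The main step, and the only one that is not purely formal, is to observe that $\|R(\lambda_n,T_|)\| \to \infty$: this is a standard consequence of the fact that the radius of convergence of the resolvent's power series around $\lambda_n$ equals $\operatorname{dist}(\lambda_n,\sigma(T_|))$, and this distance tends to $0$ because $\lambda \in \partial\sigma(T_|)$. Therefore $\|R(\lambda_n,T)\| \to \infty$ as well, which rules out $\lambda$ being in the resolvent set of $T$ and so forces $\lambda \in \sigma(T)$.
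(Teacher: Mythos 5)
Your proof is correct and follows essentially the same route as the paper: Neumann series for invariance of $F$ under the resolvent, restriction of the resolvent identities to identify $R(\lambda,T)_|$ with $R(\lambda,T_|)$, and in (c) a sequence $\lambda_n \to \lambda$ from outside the spectral disk combined with the blow-up $\|R(\lambda_n,T_|)\| \to \infty$. The only cosmetic differences are that you establish $r(T_|) \le r(T)$ and $r(T_/) \le r(T)$ directly via Gelfand's formula (the paper instead reads these off from (b)), and you make explicit the standard estimate $\|R(\mu,T_|)\| \ge \operatorname{dist}(\mu,\sigma(T_|))^{-1}$ that the paper leaves implicit.
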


%\subsection*{Acknowledgement}
%

\bibliographystyle{plain}
\bibliography{literature}

\end{document}